\theoremstyle{plain}
\newtheorem{thm}{Theorem}[section]
\newtheorem{cor}[thm]{Corollary}
\newtheorem{pro}[thm]{Problem}
\newtheorem{lem}[thm]{Lemma}
\theoremstyle{definition}
\newtheorem{defn}{Definition}[section]
\newtheorem{ass}{Assumption}[section]
\renewcommand{\d}{\mathrm{d}}
\newcommand{\eps}{\varepsilon}
\makeatletter\@addtoreset{equation}{section} \makeatother
\begin{document}

\title{Forward and Backward Mean-Field Stochastic Partial Differential Equation
and Optimal Control
\thanks{This work was supported by the Natural Science Foundation of Zhejiang Province
for Distinguished Young Scholar  (No.LR15A010001),  and the National Natural
Science Foundation of China (No.11471079, 11301177) }}

\date{}

\author[a]{Maoning Tang}
\author[a]{Qingxin Meng\footnote{Corresponding author.
\authorcr
\indent E-mail address: mqx@zjhu.edu.cn(Q. Meng),}}

\affil[a]{\small{Department of Mathematical Sciences, Huzhou University, Zhejiang 313000, China}}

\maketitle

\begin{abstract}
\noindent

This paper is mainly concerned with
 the solutions  to   both forward and backward mean-field stochastic partial differential equation and  the corresponding optimal
control problem for mean-field stochastic
partial differential equation. We first prove
the continuous dependence theorems of
forward and backward mean-field stochastic partial differential equations and  show the existence and uniqueness of solutions to them. Then we establish necessary
and sufficient optimality  conditions of
the control problem in
the form of Pontryagin's maximum principles. To illustrate the theoretical results,
we apply stochastic maximum principles to study an example, an infinite-dimensional linear-quadratic control problem of mean-field type. Further an application  to a Cauchy problem for a controlled stochastic linear PDE of mean-field type  are studied.
\end{abstract}
\textbf{Keywords}:  Mean-Field;
Stochastic Partial Differential
Equation; Backward Stochastic Partial
Differential Equation; Optimal Control;
Maximum Principle; Adjoint Equation
\section{Introduction}

 In recent years, due to many practical and theory applications, in the finite dimensional cases, the  stochastic differential
equation of mean-field type,
also called  mean-field stochastic
differential equation (MF-SDE), and
the corresponding optimal control
problem and financial applications
  has been studied expensively.
  For more details on these
  topics,  the interested
  reader is referred to Kac
(1956),  McKean (1966),
  Andersson and Djehiche (2011), Buckdahn, Djehiche and Li (2011),
Li (2012), Meyer-Brandis, ${\O}$ksendal and Zhou (2012),  Elliott, Li and  Ni (2013), Shen
and Siu (2013), Shen, Meng and Shi (2014),
 Wang, Zhang  and Zhang (2014), Meng and Shen (2015),
Yong(2013), Du, Huang and Qin (2013)     and therein.
On the other hand, intuitively speaking, the adjoint equation of a controlled state
process driven by the MF-SDE is a mean-field backward
stochastic differential equation (MF-BSDE).
So it is not until Buckdahn et al. (2009a,
2009b) established the theory of the MF-BSDEs that the optimal control problem of mean-field type has become a popular topic
where the adjoint equation associated with
the stochastic maximum principle  is a  MF-BSDE.

The purpose of this  paper is
to extend  finite dimensional MF-SDE and MF-BSDE and the corresponding optimal control
problem to infinite dimensional case,
i.e., to mean-field stochastic partial differential equations (MF-SPDE) and backward mean-field stochastic partial differential equations (MF-BSPDE). We will establish the basic theorem  of  MF-SPDE and
MF-BSPDE and  the basic optimal control theorem  for MF-SPDE. Precisely speaking, by It\^o's formula in the Gelfand triple and under some proper assumptions,  we firstly  prove continuous dependence property of the solution to  both MF-SPDE and MF-BSPDE on the parameter. Then the existence and uniqueness of solutions to MF-SPDE and MF-BSPDE are proved by
the continuous dependence theorem and
the classic parameter extension approach.
The second main result established  in  this paper   is  the corresponding sufficient and necessary stochastic maximum principle for  the optimal control problem of MF-BSPDE which  are obtained by establishing an convex variation formula under the  convexity assumption of the control domain.
Finally, to illustrate our
results, we apply the stochastic maximum principles to a  mean-field linear-quadratic (LQ) control problem of BF-SPDE. Using the necessary and sufficient maximum principles,
the optimal control strategy is given explicitly in a dual representation. As an application,
a LQ problem for a concrete cauchy problem of
 controlled  mean-field stochastic partial equation is solved.

 The rest of this paper is organized as follows. Section 2 gives notations and framework.
 In section 3, we prove  the continuous
 dependence theory and the existence
 and uniqueness  of solutions to
 MF-SPDE in the abstract form.
 In section 4, we prove  the continuous
 dependence theory and the existence
 and uniqueness  of solutions to
 MF-BSPDE in the abstract form.
In Section 5, the optimal control problem
of MF-SPDE is studied in detail where we establish the stochastic sufficient and necessary maximum principles under convex
control domain assumption. Sections  6 applys the stochastic maximum principles to solve  linear -quadratic optimal control problems
 of MF-SPDE. The final section concludes the paper.

\section{Notations}

Denote by $\cal T $ the
fixed time duration $[0, T].$
Let $(\Omega, {\mathscr  F},  \mathbb F, {\mathbb P})$
be a complete
probability space
on which one-dimensional real-valued
Brownian  motion  $\{ W(t), 0 \leq t \leq T \}$
is defined with
${\mathbb F}
\triangleq \{ {\mathscr F}_t, {0\leq t\leq T} \}$  being its
natural filtration  augmented by all the $\mathbb P$-null sets.
Denote by  ${\mathbb E} [\cdot]$
the expectation with respect to  the
probability ${\mathbb P}$. We denote by $\mathscr P $ the predictable $\sigma-$
algebra associated with ${\mathbb F}.$
For any topological space $\Lambda, $  we denote by  ${\mathscr B} (\Lambda)$  its Borel $\sigma$-algebra. Let $X$ be any Hilbert space in  which the norm is denoted by$\| \cdot \|_X$ .
 Next we introduce the following spaces:
\begin{enumerate}
\item[$\bullet$] ${\cal M}_{\mathscr F}^2 (0, T; X)$: the space  of all $X$-valued ${\mathscr F}$-adapted processes
$f \triangleq \{ f (t, \omega), (t,\omega) \in [0, T] \times \Omega \}$ endowed with the norm
$\| f \|_{{\cal M}_{\mathscr F}^2 (0, T; X)} \triangleq \sqrt {
{\mathbb E} \big [ \int_0^T \| f(t) \|_X^2 d t \big ] } < \infty$;
\item[$\bullet$] ${\cal S}_{\mathscr F}^2 (0, T; X)$: the space of all $X$-valued ${\mathscr F}$-adapted c\`adl\`ag processes
$f \triangleq \{ f (t, \omega), (t,\omega) \in [0, T] \times \Omega \}$ endowed with the norm
$\| f \|_{{\cal S}_{\mathscr F}^2 (0, T; X)} \triangleq \sqrt { {\mathbb E} \big [
\sup_{0 \leq t \leq T} \| f(t) \|_X^2 \big ] } < \infty$;
\item[$\bullet$] ${\cal L}^p (\Omega, \mathscr F, \mathbb P; X)$: the space of all $X$-valued ${\mathscr F}$-measurable random variables
$\xi$ endowed with the norm $\| \xi \|_{{\cal L}^p (\Omega, \mathscr F, \mathbb P; X)} \triangleq \sqrt {{\mathbb E} \big [ \| \xi \|_X^p \big ] } < \infty,$ where $p\geq 1$ are given real number.
\end{enumerate}

\section {Mean-Field Stochastic Partial  Differential  Equation}
This section is devoted the study of a new
 type stochastic partial differential equation
 with abstract form, the so called MF-SPDE.
  Let $(\bar \Omega, \bar {\mathscr F}, \bar {\mathbb P})=(\Omega\times \Omega,
\mathscr F\times \mathscr F, \mathbb P\times
\mathbb P)$ be the product of $(\Omega,
\mathscr F, \mathbb P)$ with itself.   We endow this product space with the
filtration
 $\{{ \mathscr{\bar F}}_t\}_{0\leq t\leq T}
 =\{{\mathscr{ F}}_t\times {\mathscr{ F}}_t\}_{0\leq t\leq T}.
 $ By $\mathscr {\bar P}$ we denote
  the product $\mathscr P\times \mathscr P.$
  Let $\bar {\mathbb E}$  denotes
  the expectation with respect to
  the product probability space
  $\bar \Omega.$ Denote by  ${\cal M}_{\bar {\mathscr F}}^2 (0, T; X)$ the set of all $X$-valued $\bar {\mathscr F}$-adapted processes
$f \triangleq \{ f (t, \omega',\omega), (t,\omega', \omega) \in [0, T] \times \bar \Omega \}$ such that
$\| f \|_{{\cal M}_{\bar{\mathscr F}}^2 (0, T; X)} \triangleq \sqrt {
\bar {\mathbb E} \big [ \int_0^T \| f(t) \|_X^2 d t \big ] } < \infty.$ For $p\geq 1,$ a random variable $\xi \in L^p(\Omega, \mathscr F, P; X)$ originally defined
 on $\Omega$ can be extended
 canonically to $\bar \Omega$: $\xi'(\omega',\omega)=\xi(\omega'),(\omega', \omega)\in \bar \Omega.$
 For any $\theta\in  {\cal L}^1(\bar\Omega, \bar {\mathscr F}, \bar P; X),$
 the variable $\theta(\cdot, \omega):\Omega\longrightarrow  X$
belongs to ${\cal L}^p(\Omega,  {\mathscr F},
\mathbb P; X),$ $\mathbb P(d\omega)-$a.s., we
denote its expectation by
$$\mathbb E'[\theta(\cdot, \omega)]=\int_\Omega \theta (\omega', \omega))\mathbb P(d\omega').$$
Notice that $\mathbb E'[\theta]=
\mathbb E'[\theta(\cdot, \omega)]\in
L^p(\Omega, {\mathscr F}, \mathbb P; X),$ and $$\bar {\mathbb
E}[\theta](=\int_{\bar \Omega}\theta d\bar {\mathbb P}=\int_{\Omega}\mathbb
E'[\theta(\cdot, \omega)]\mathbb Pd(\omega))=\mathbb E[\mathbb
E'[\theta]].$$
Let  $$V \subset H = H^*\subset V^*$$ be  Gelfand triple, i.e., $(H, (\cdot, \cdot)_H)$
 is a separable  Hilbert spaces and $V$ is
  a reflextive Banach space
  such that  $H$ is identified with its dual space
  $H^*$
by the Riesz isomorphism and  $V$
is densely embedded in $H$. We denote by $\left < \cdot,\cdot \right >$ the duality product between
$V$ and $V^{*}$. Moreover, we  denote  by  ${\mathscr L} (V, V^*)$  the set of all bounded
linear operators from  $V$ into $V^*$.
In  the
Gelfand triple $(V, H, V^*),$
consider  the following operators
\begin{eqnarray} \label{eq:3.1}
  A&=&A(t,\omega) : [ 0, T ] \times \nonumber
\Omega \rightarrow {\mathscr L} ( V, V^* ),\nonumber
\\b&=&b(t,\omega', \omega, x', x) : [0, T] \times
\bar \Omega \times H \times H \rightarrow H,
\nonumber
\\g&=&g(t,\omega', \omega, x', x) : [0, T] \times
\bar \Omega \times H \times H \rightarrow H,
\end{eqnarray}
which satisfy  the following  standard assumptions:

\begin{ass}\label{ass:1.1}
Suppose that there exist constant
$\alpha>0, \lambda, $ and $C$
such that the following  conditions
holds for all $x,x', \bar x, \bar x'$
and  a.e. $( t, \omega',\omega ) \in [ 0, T ] \times \bar\Omega$
\begin{enumerate}
\item[(i)](Measurability) The operator $A$ is ${\mathscr P} / {\mathscr B} ( {\mathscr L} ( V, V^* ) )$
measurable; $b$ and $g $ are $\bar {\mathscr P} \otimes
{\mathscr B} (H) \otimes {\mathscr B} (H) / {\mathscr B} (H)$ measurable;
\item[(ii)] (Integrality) $b ( \cdot, 0, 0 ), g ( \cdot, 0, 0 )\in {\cal M}_{\bar {\mathscr F}}^2 ( 0, T; H )$;
\item[(ii)] (Coercivity)
\begin{eqnarray} \label{eq:2.22}
\left < A (t) x, x \right > + \lambda \| x \|^2_H \geq \alpha \| x \|^2_V \  ;
\end{eqnarray}
\item[(iv)](Boundedness)
\begin{eqnarray}
\sup_{( t, \omega ) \in [0, T] \times \Omega} \| A ( t,\omega ) \|_{{\mathscr L} ( V, V^* )} \leq C \ ;
\end{eqnarray}
\item[(iv)] (Lipschitz Continuity)
\begin{eqnarray}\label{eq:2.40}
\| b ( t, x', x) - b ( t, {\bar x'}, {\bar x} ) \|_H
+ \| g ( t, x', x ) - g ( t, {\bar x'}, {\bar x} ) \|_H
\leq C [ \| x - {\bar x} \|_H + \| x' - {\bar x}' \|_H ] \ .
\end{eqnarray}
\end{enumerate}
\end{ass}

Using  the above  notation, in   the
Gelfand triple $(V, H, V^*),$ we consider
the MF-SPDE in the following abstract form with the
coefficients $(A, b,g)$ defined by \eqref{eq:3.1} and the initial value $x\in H$ :
\begin{eqnarray}\label{eq:2.1}
\left
\{\begin{aligned}
d X(t) = & \ \big\{ - A (t) X (t)+ \mathbb E'[b ( t,  X' (t), X ( t  ) ]\big\} d t
+ \mathbb E'[ g( t, x' (t), X (t) )] d W(t) \ , \quad t \in [ 0, T ] \ , \\
X (t) = & \ x \in  H \ ,
\end{aligned}
\right.
\end{eqnarray}
where we have used the following notation defined by
\begin{eqnarray}
   \mathbb E'[b ( t,  X' (t), X ( t) ]=
   \int_\Omega b(t, \omega', \omega, X(t,\omega'), X(t,\omega)) P(d \omega')
\end{eqnarray}
and

\begin{eqnarray}
   \mathbb E'[g ( t,  X' (t), X ( t) ]=
   \int_\Omega g(t, \omega', \omega, X(t,\omega'), X(t,\omega)) P(d \omega')
\end{eqnarray}

Now we give  the definition of  the solutions to
MF-SPDE \eqref{eq:2.1}.
\begin{defn}\label{defn:c1}
An $V$-valued, ${\mathbb F}$-adapted process $X(\cdot)$ is said to be  a solution to
MF-SPDE \eqref{eq:2.1}, if $X (\cdot) \in {\cal M}_{\mathscr F}^2 ( 0, T; V )$, such that for a.e. $( t, \omega ) \in [ 0, T ] \times \Omega$ and  every $\phi \in V,$ we have
\begin{eqnarray}
\begin{split}
( X(t), \phi )_H =& \ ( x, \phi )_H - \int_0^t \left < A (s) X (s), \phi \right > d s
+\int_0^t \big( \mathbb E'[b ( s, X' (s), X ( s) )], \phi \big)_H d s \\
& +\int_0^t \big( \mathbb E'[g ( s, X' (s), X ( s  ) )], \phi \big)_H d W (s) \ , \quad t \in [ 0, T ] \ ,
\end{split}
\end{eqnarray}
or alternatively, in the sense of $V^*$, $X (\cdot)$ have  the following It\^o form:
\begin{eqnarray}
\begin{split}
 X(t) =& \ x - \int_0^t A (s) X (s) d s + \int_0^t \mathbb E'[ b ( s, X'(s), X(s) )] d s+\int_0^t  \mathbb E'[g ( s, X' (s), X ( s ) )] d W (s) \ , \quad t \in [ 0, T ] \ .
\end{split}
\end{eqnarray}
\end{defn}
The following result is
the continuous dependence theorem of the solution
 to the MF-SPDE \eqref{eq:2.1} on the coefficients $(A,b,g)$ and the
initial value $x$ which  is also called a priori
estimate for the solution.

\begin{thm}[{\bf Continuous dependence theorem of MF-SPDE}] \label{lem:1.2}
    Suppose that $X (\cdot)$ is a solution to MF-SPDE \eqref{eq:2.1}
    with the initial value $x$ and the coefficients $(A,b,g)$ satisfying Assumptions \ref{ass:1.1}.
   Then we have the following estimate:
\begin{eqnarray}\label{eq:3.10}
\begin{split}
 {\mathbb E} \bigg [ \sup_{0 \leq t \leq T} \|X(t) \|_H^2 \bigg
]
+  {\mathbb E} \bigg [ \int_0^T \| X (t) \|_V^2 d t \bigg ]
 \leq K \bigg \{ \bar {\mathbb E} [ \| x\|^2_H ] + \bar {\mathbb E}
\bigg [ \int_0^T \| b ( t, 0, 0 ) \|_H^2 d t \bigg ] + \bar {\mathbb E}
\bigg [ \int_0^T \| g ( t, 0, 0 ) \|_H^2 d t \bigg ]\bigg \} \ ,
\end{split}
\end{eqnarray}
where $K$ is a positive constant
which only depend on the constants  $C, T, \alpha$ and
$\lambda.$
 Further, suppose that  ${\bar X} (\cdot)$ is  the
solution to MF-SPDE  \eqref{eq:2.1}
 with the initial value $\bar x$ and the coefficients  $( A, {\bar b}, {\bar g} )$ satisfying Assumption \ref{ass:1.1}.
   Then we have
\begin{eqnarray}\label{eq:3.111}
\begin{split}
& {\mathbb E} \bigg [ \sup_{0 \leq t \leq T} \| X (t) - {\bar X} (t) \|_H^2 \bigg ]
+ {\mathbb E} \bigg [ \int_0^T \| X (t) - {\bar X} (t) \|_V^2 d t \bigg ]  \\
& \leq K \bigg \{{\bar {\mathbb  E}} \bigg [ \int_0^T \| b ( t, {\bar X}' (t), {\bar X} ( t) )
- {\bar b}( t, {\bar X'} (t), {\bar X} ( t ) ) \|_H^2 d t \bigg ] + {\bar {\mathbb E}} \bigg [ \int_0^T \| g ( t, {\bar X'} (t),
{\bar X} ( t ) ) - {\bar g}( t, {\bar X'} (t), {\bar X} ( t ) ) \|_H^2 d t \bigg ]\bigg\} \ .
\end{split}
\end{eqnarray}
\end{thm}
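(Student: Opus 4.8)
The plan is to run the coercivity-based energy method in the Gelfand triple, modified so as to absorb the mean-field averaging $\mathbb{E}'[\cdot]$. The engine is It\^o's formula for the square norm $\|X(t)\|_H^2$, which in $(V,H,V^*)$ reads
\[
\|X(t)\|_H^2 = \|x\|_H^2 + \int_0^t\big[-2\langle A(s)X(s),X(s)\rangle + 2(X(s),\mathbb{E}'[b(s,X'(s),X(s))])_H + \|\mathbb{E}'[g(s,X'(s),X(s))]\|_H^2\big]\,ds + 2\int_0^t (X(s),\mathbb{E}'[g(s,X'(s),X(s))])_H\,dW(s).
\]
I would control the three bracketed terms by, respectively, the coercivity bound \eqref{eq:2.22} (which supplies the good term $-2\alpha\|X\|_V^2$ together with a benign $2\lambda\|X\|_H^2$), the Lipschitz bound \eqref{eq:2.40} taken at $(\bar x',\bar x)=(0,0)$ so that $\|b(s,X',X)\|_H\le\|b(s,0,0)\|_H+C(\|X\|_H+\|X'\|_H)$ and similarly for $g$, and Jensen's inequality $\|\mathbb{E}'[\cdot]\|_H\le\mathbb{E}'[\|\cdot\|_H]$ to pull the norm inside the averaging; Young's inequality then splits the cross term $2(X,\mathbb{E}'[b])_H$.

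The genuinely new point, and the step I expect to be the main obstacle, is the mean-field bookkeeping. After taking the expectation $\mathbb{E}$, quantities such as $\mathbb{E}\big[(\mathbb{E}'[\|X'(s)\|_H])^2\big]\le\mathbb{E}\big[\mathbb{E}'[\|X'(s)\|_H^2]\big]$ appear; since $X'(s,\omega',\omega)=X(s,\omega')$ and $\mathbb{E}[\mathbb{E}'[\theta]]=\bar{\mathbb{E}}[\theta]$ from the preliminaries, these collapse back to $\mathbb{E}[\|X(s)\|_H^2]$. Thus every bound re-expresses itself through $\mathbb{E}[\|X(s)\|_H^2]$, the stochastic integral drops out under $\mathbb{E}$, and I arrive at
\[
\mathbb{E}[\|X(t)\|_H^2] + 2\alpha\,\mathbb{E}\!\int_0^t\!\|X(s)\|_V^2\,ds \;\le\; C\,\mathcal{D}_0 + C\!\int_0^t\! \mathbb{E}[\|X(s)\|_H^2]\,ds,
\]
where $\mathcal{D}_0$ abbreviates the data on the right-hand side of \eqref{eq:3.10}. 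Gronwall's inequality then closes the $H$-estimate, and rearranging the coercive term yields the $\int_0^T\|X\|_V^2$ estimate.

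To promote this to the $\sup_t$ bound in \eqref{eq:3.10}, I would return to the It\^o identity, take the supremum over $t$ before the expectation, and estimate the martingale through the Burkholder--Davis--Gundy inequality,
\[
\mathbb{E}\sup_{0\le t\le T}\Big|\int_0^t 2(X(s),\mathbb{E}'[g(s,X'(s),X(s))])_H\,dW(s)\Big| \le C\,\mathbb{E}\Big[\Big(\sup_{0\le s\le T}\|X(s)\|_H^2\int_0^T\|\mathbb{E}'[g(s,X'(s),X(s))]\|_H^2\,ds\Big)^{1/2}\Big],
\]
and then apply Young's inequality to absorb the factor $\mathbb{E}\sup_s\|X(s)\|_H^2$ into the left-hand side, the surviving integral being already controlled by the first estimate.

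For the continuous-dependence estimate \eqref{eq:3.111} the plan is to apply the identical machine to the difference $\tilde X:=X-\bar X$, which solves the MF-SPDE with drift $-A\tilde X+\mathbb{E}'[b(t,X',X)]-\mathbb{E}'[\bar b(t,\bar X',\bar X)]$ and the analogous diffusion. The only extra manipulation is the telescoping split
\[
\mathbb{E}'[b(t,X',X)]-\mathbb{E}'[\bar b(t,\bar X',\bar X)] = \mathbb{E}'[b(t,X',X)-b(t,\bar X',\bar X)] + \mathbb{E}'[b(t,\bar X',\bar X)-\bar b(t,\bar X',\bar X)],
\]
and likewise for $g$: the first group is Lipschitz in $(\tilde X,\tilde X')$ by \eqref{eq:2.40} and feeds into the Gronwall loop exactly as before, while the second group is precisely the forcing term $b-\bar b$ evaluated along $\bar X$ that appears on the right-hand side of \eqref{eq:3.111}. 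Running the same coercivity/BDG/Gronwall argument then delivers \eqref{eq:3.111}.
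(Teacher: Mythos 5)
Your proposal is correct and follows essentially the same route as the paper: It\^o's formula for $\|\cdot\|_H^2$ in the Gelfand triple, coercivity plus the Lipschitz bound, the identity $\mathbb{E}[\mathbb{E}'[\,\cdot\,]]=\bar{\mathbb{E}}[\,\cdot\,]$ to collapse the mean-field terms, Gr\"onwall's inequality, and Burkholder--Davis--Gundy with absorption of $\mathbb{E}\big[\sup_t\|\cdot\|_H^2\big]$ into the left-hand side. The only (organizational) difference is the order: the paper proves the difference estimate \eqref{eq:3.111} first---using the same telescoping split you describe---and then obtains \eqref{eq:3.10} by specializing $(\bar b,\bar g,\bar x)=(0,0,0)$, whereas you prove \eqref{eq:3.10} directly and then rerun the machinery for the difference.
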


\begin{proof}
 It suffices to prove \eqref{eq:3.111}
 since the estimate \eqref{eq:3.10} can be obtained as a direct consequence of
 \eqref{eq:3.111} by taking the coefficient
$( A, {\bar b}, {\bar g} ) = (A, 0, 0)$
with which the solution to MF-SPDE \eqref{eq:2.1} is ${\bar x} (\cdot) = 0.$
 In order to
simplify our notation, we denote by
\begin{eqnarray*}
&& {\hat X} (t) \triangleq X (t) - {\bar X} (t) \ , \\
&& {\hat b} (t) \triangleq  b ( t, {\bar X}' (t), {\bar X} ( t  ) )
-  {\bar b} ( t, {\bar X}'(t), {\bar X} ( t) ) \ , \\
&&{\hat g} (t) \triangleq  g ( t, {\bar X}' (t), {\bar X} ( t ) )- {\bar g} ( t, {\bar X}' (t), {\bar X} ( t ) ) \ .
\end{eqnarray*}
Using It\^o's formula to $\| {\hat X} (t) \|_H^2,$
we get that
\begin{eqnarray}
  \begin{split}
    ||\hat X(t)||^2_H=&||\hat x||_H -2 \int_0^t \left < A (s) \hat X (s),
    \hat X(s) \right > d s
+2\int_0^t\Big(\mathbb E' [b ( s, X' (s), X ( s) )
- \bar b ( s, \bar X' (s), \bar X ( s) )], \hat X(s) \Big)_H d s
 \\&+2\int_0^t \Big( \mathbb E'[g ( s, X' (s), X (s))- \bar g(s, \bar X'(s), \bar X(s))], \hat X(s)) \Big)_H d W (s)
 \\&+\int _0^t ||\mathbb E' [ g ( s, X' (s), X ( s  )-\bar g(s, \bar X'(s), \bar X(s)) ]||^2_H ds
  \end{split}
\end{eqnarray}
 In view of  Assumption \ref{ass:1.1} and the elementary inequality $2
a b \leq a^2 + b^2$,  $\forall a, b > 0$, we
obtain
\begin{eqnarray}\label{eq:2.10}
\begin{split}
&\| {\hat X} (t) \|_H^2 + 2 \alpha \int_0^t \| {\hat X} (s) \|_V^2 d s \\
& \leq \| {\hat x}\|_H^2 + K ( C, \lambda ) \int_0^t \| {\hat X} (s) \|_H^2 d s
+ K (C) \int_0^t \mathbb E\|{\hat X} ( s ) \|_H^2 d s + \int_0^t
 \mathbb E'\| {\hat b} (s) \|_H^2 d s
  \\
& \quad + 2 \int_0^T
\mathbb E'\| {\hat g} (s) \|_H^2 d s + 2 \int_0^t \Big( \mathbb E'[g ( s, X' (s), X ( s  ))- \bar g(s, \bar X'(s), \bar X(s))], \hat X(s)) \Big)_Hd W (s) \ .
\end{split}
\end{eqnarray}
Taking expectations on both sides of \eqref{eq:2.10} leads to
\begin{eqnarray}\label{eq:113}
\begin{split}
& {\mathbb E} [ \| {\hat X} (t) \|_H^2 ]
+ 2 \alpha {\mathbb E} \bigg [ \int_0^t \| {\hat X} (s) \|_V^2 d s \bigg ] \\
&\leq {\mathbb E} \| {\hat X}\|_H^2
+ K ( \lambda, C ) {\mathbb E} \bigg [ \int_0^t \| {\hat X} (s) \|_H^2 d s \bigg ]+ \bar {\mathbb E} \bigg [ \int_0^T \| {\hat b} (s) \|_H^2 d s \bigg ]
+ 2\bar  {\mathbb E} \bigg [ \int_0^T \| {\hat g} (s) \|_H^2 d s \bigg].
\end{split}
\end{eqnarray}
Then applying Gr\"onwall's inequality to \eqref{eq:113} yields
\begin{eqnarray}\label{eq:2.12}
\begin{split}
  &\sup_{0\leq t\leq T}  {\mathbb E} [ \| {\hat X} (t) \|_H^2 ] +
{\mathbb E} \bigg [ \int_0^T \| {\hat X} (t) \|_V^2 d t \bigg ]
\\ & \quad \leq K \bigg \{ {\mathbb E} [ || \hat X
\|_H^2 ] + \bar {\mathbb E} \bigg [ \int_0^T \| {\hat b} (t) \|_H^2 d t \bigg ]
+ \bar {\mathbb E} \bigg [ \int_0^T \| {\hat g} (t) \|_H^2 d t \bigg ] \bigg \} \ .
\end{split}
\end{eqnarray}
where $K$ is a positive constant depending only on $T$, $C$, $\alpha$ and
$\lambda$.

Furthermore,  in view of  \eqref{eq:2.10} and \eqref{eq:2.12}, the Lipschitz continuity condition ( see\eqref{eq:2.40}) and the Burkholder-Davis-Gundy, we get  that
\begin{eqnarray}\label{eq:2.145}
\begin{split}
& {\mathbb E} \bigg [ \sup_{0 \leq t \leq T} \| {\hat x} (t) \|_H^2 \bigg ]\\
& \leq K \bigg \{ {\mathbb E} [ \| {\hat x} \|_H^2 ]
+ \bar {\mathbb E} \bigg [ \int_0^T \| {\hat b} (t) ||_H^2 d t \bigg ]
+ \bar {\mathbb E} \bigg [ \int_0^T \| {\hat g} (t) \|_H^2 d t \bigg ] \bigg\}  \\
& \quad + 2 {\mathbb E} \bigg [ \sup_{0 \leq t \leq T} \bigg | \int_0^t \Big( \mathbb E'[g ( s, X' (s), X ( s  ))- \bar g(s, \bar X'(s), \bar X(s))], \hat X(s)) \Big)_H d W (s) \bigg | \bigg ] \\
& \leq K \bigg \{ {\mathbb E} [ \| x \|_H^2 ]
+\bar  {\mathbb E} \bigg [ \int_0^T \| {\hat b} (t) \|_H^2 d t \bigg ]
+ \bar {\mathbb E} \bigg [ \int_0^T \| {\hat g} (t) \|_H^2 d t \bigg ] \bigg\} + \frac{1}{2} {\mathbb E} \bigg [ \sup_{0 \leq t \leq T} \| {\hat X} (t) \|_H^2 \bigg ] \ .
\end{split}
\end{eqnarray}
Therefore, \eqref{eq:3.111} can be obtained by  combining \eqref{eq:2.12} and \eqref{eq:2.145}.
The proof is complete.
\end{proof}

\begin{thm}[{\bf Existence and uniqueness theorem of MF-SPDE}]\label{lem:1.1}

Let Assumption \ref{ass:1.1} be satisfied.
Then for any given initial value $x,$
the MF-SPDE  \eqref{eq:2.1} admits a unique
solution $X (\cdot) \in S_{\mathscr F}^2 ( 0, T; H )$.
\end{thm}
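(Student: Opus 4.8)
The plan is to obtain uniqueness for free from the continuous dependence theorem and to obtain existence by the parameter extension (method of continuity) announced in the introduction, with Theorem \ref{lem:1.2} supplying the uniform a priori estimate that drives the induction. For uniqueness, suppose $X(\cdot)$ and $\bar X(\cdot)$ both solve \eqref{eq:2.1} with the same initial value $x$ and the same coefficients $(A,b,g)$. Applying \eqref{eq:3.111} with $(A,\bar b,\bar g)=(A,b,g)$ makes the entire right-hand side vanish, so $X=\bar X$ in ${\cal S}^2_{\mathscr F}(0,T;H)\cap{\cal M}^2_{\mathscr F}(0,T;V)$. Note that Theorem \ref{lem:1.2} delivers control of both $\mathbb E[\sup_t\|X(t)\|_H^2]$ and $\mathbb E[\int_0^T\|X(t)\|_V^2\,dt]$, so the solution we construct will automatically lie in the space ${\cal S}^2_{\mathscr F}(0,T;H)$ asserted in the statement (indeed in the intersection with ${\cal M}^2_{\mathscr F}(0,T;V)$ required by the definition of a solution).

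For existence, introduce for each $\rho\in[0,1]$ and each pair of forcing terms $f,h\in{\cal M}^2_{\bar{\mathscr F}}(0,T;H)$ the interpolating family
\begin{equation*}
dX^\rho(t)=\big\{-A(t)X^\rho(t)+\rho\,\mathbb E'[b(t,(X^\rho)'(t),X^\rho(t))]+f(t)\big\}\,dt+\big\{\rho\,\mathbb E'[g(t,(X^\rho)'(t),X^\rho(t))]+h(t)\big\}\,dW(t),
\end{equation*}
with $X^\rho(0)=x$, and let $\mathcal S(\rho)$ denote the assertion that this equation has a unique solution in ${\cal S}^2_{\mathscr F}(0,T;H)\cap{\cal M}^2_{\mathscr F}(0,T;V)$ for every admissible $(x,f,h)$. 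The base case $\mathcal S(0)$ is the standard linear monotone SPDE $dX=(-AX+f)\,dt+h\,dW$ in the Gelfand triple $(V,H,V^*)$, whose unique solvability under the coercivity \eqref{eq:2.22} and boundedness parts of Assumption \ref{ass:1.1} is classical (the variational/Galerkin theory of Lions--Pardoux--Krylov--Rozovskii). The target equation \eqref{eq:2.1} is exactly $\mathcal S(1)$ with $f=h=0$, so it suffices to show that $\mathcal S(\rho_0)$ implies $\mathcal S(\rho_0+\delta)$ for some step size $\delta>0$ \emph{independent of} $\rho_0$; finitely many steps then carry $\rho=0$ to $\rho=1$.

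For the inductive step, fix $\rho_0$ with $\mathcal S(\rho_0)$ and set up a Picard iteration. Given $X^n$, let $X^{n+1}$ be the unique $\mathcal S(\rho_0)$-solution of the equation whose nonlinear coefficients are $\rho_0\,\mathbb E'[b]$ and $\rho_0\,\mathbb E'[g]$ and whose forcing terms are $\delta\,\mathbb E'[b(\cdot,(X^n)',X^n)]+f$ and $\delta\,\mathbb E'[g(\cdot,(X^n)',X^n)]+h$, respectively. Subtracting the equations for $X^{n+1}$ and $X^n$ and applying the continuous dependence estimate \eqref{eq:3.111}: the two equations share the same $A$ and the same $\rho_0$-nonlinearity, so their coefficients differ only through the forcing, and the relevant difference, evaluated along the trajectory $X^n$, is $\delta\big(\mathbb E'[b(\cdot,(X^n)',X^n)]-\mathbb E'[b(\cdot,(X^{n-1})',X^{n-1})]\big)$ (and its $g$-analogue). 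Invoking the Lipschitz continuity \eqref{eq:2.40} and the contraction of $\mathbb E'$ then bounds the ${\cal S}^2\cap{\cal M}^2$-norm of $X^{n+1}-X^n$ by $\delta^2 K C^2(1+T)$ times that of $X^n-X^{n-1}$, where $K$ is the constant of Theorem \ref{lem:1.2}. Choosing $\delta$ so small that $\delta^2 K C^2(1+T)<1$ makes the map $X^n\mapsto X^{n+1}$ a contraction, whose fixed point solves the $(\rho_0+\delta)$-equation; this establishes $\mathcal S(\rho_0+\delta)$.

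The decisive structural point, and the reason the scheme closes, is that $K$ in \eqref{eq:3.111} depends only on $C,T,\alpha,\lambda$ and not on $\rho_0$, so the admissible $\delta$ is uniform and the induction terminates after $\lceil 1/\delta\rceil$ steps. I expect the main obstacles to be two bookkeeping points rather than a conceptual one: first, rigorously quoting or reproving the base case $\mathcal S(0)$ for the linear monotone SPDE in the Gelfand triple; and second, checking that once $X^n\in{\cal S}^2_{\mathscr F}(0,T;H)$ is frozen, the mean-field coefficients $\mathbb E'[b(\cdot,(X^n)',X^n)]$ and $\mathbb E'[g(\cdot,(X^n)',X^n)]$ genuinely lie in ${\cal M}^2_{\bar{\mathscr F}}(0,T;H)$, so that $\mathcal S(\rho_0)$ is applicable and Theorem \ref{lem:1.2} is legitimately invoked; this last verification rests on the integrability assumption (ii), the Lipschitz bound \eqref{eq:2.40}, and the extension/expectation conventions for $\mathbb E'$ fixed at the start of Section 3.
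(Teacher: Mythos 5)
Your proposal is correct and follows essentially the same route as the paper: uniqueness from the continuous dependence estimate \eqref{eq:3.111}, and existence by the method of continuity with the classical linear SPDE (Pr\'ev\^ot--R\"ockner) as the base case, where your Picard iteration $X^n\mapsto X^{n+1}$ is precisely the paper's contraction map $\Gamma$ and the uniform step size comes, as in the paper, from the $\rho$-independence of the constant $K$ in Theorem \ref{lem:1.2}. Your two ``bookkeeping'' remarks (verifying the base case and checking that the frozen coefficients lie in ${\cal M}^2_{\bar{\mathscr F}}(0,T;H)$) are points the paper passes over silently, so including them only strengthens the argument.
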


\begin{proof}
The uniqueness of the solution
of MF-SPDE \eqref{eq:2.1} is
implied by the a priori estimate
\eqref{eq:3.111}.
 Consider a family of MF-SPDE
 parameterized by  $\rho \in [0, 1]$ as
 follows:
\begin{eqnarray}\label{eq:3.12}
\begin{split}
 X(t) =& \ x - \int_0^t A (s) X (s) d s + \int_0^t \Big [\rho \mathbb E'[ b ( s, X'(s), X(s) )]+ b_0(t)]d s+\int_0^t \Big[\rho\mathbb E'[g ( s, X' (s), X ( s ) )]+g_0(t)\Big ] d W (s) ,
\end{split}
\end{eqnarray}
where $b_0 (\cdot) \in {\cal M}_{\mathscr F}^2 ( 0, T; H )$ and
$g_0 (\cdot) \in {\cal M}_{\mathscr F}^2 ( 0, T; H )$ are two  any  given stochastic process.
It is easily seen that the original  MF-SPDE
\eqref{eq:2.1} is ''embedded'' in
the MF-SPDE \eqref{eq:3.12}
when we take the parameter $\rho=1$ and
$b_0 (\cdot)\equiv 0,g_0 (\cdot)\equiv0.$
Obviously, the MF-SPDE \eqref{eq:3.12} have  coefficients $(A, \rho b + b_0, \rho g+ g_0)$
satisfying  Assumption \ref{ass:1.1} with the same Lipshcitz constant $C$. Suppose
 for any
$b_0 (\cdot),g_0 (\cdot) \in {\cal M}_{\mathscr F}^2 ( 0, T; H )$ and  some  parameter $\rho = \rho_0$,
the MF-SPDE \eqref{eq:3.12} admits a unique solution $X (\cdot)\in {\cal M}_{\mathscr F}^2 ( 0, T; V)$ . For any parameter $\rho$,
we can rewrite the MF-SPDE \eqref{eq:3.12}  as
\begin{eqnarray}\label{eq:3.14}
\begin{split}
 X(t) =& \ x - \int_0^t A (s) X (s) d s + \int_0^t \Big [\rho_0 \mathbb E'[ b ( s, X'(s), X(s) )]+ b_0(t)+(\rho-\rho_0) \mathbb E'[ b ( s, X'(s), X(s) )]\Big]d s
 \\&+\int_0^t \Big[\rho_0\mathbb E'[g ( s, X' (s), X ( s ) )]+g_0(t)
 +(\rho-\rho_0)\mathbb E'[g ( s, X' (s), X ( s ) )]\Big ] d W (s) .
\end{split}
\end{eqnarray}
 Therefore, by our above supposition,
  for any $x (\cdot) \in {\cal M}_{\mathscr F}^2 ( 0, T; V),$
the following MF-SPDE
\begin{eqnarray}\label{eq:3.19}
\begin{split}
 X(t) =& \ x - \int_0^t A (s) X (s) d s + \int_0^t \Big [\rho_0 \mathbb E'[ b ( s, X'(s), X(s) )]+ b_0(t)+(\rho-\rho_0) \mathbb E'[ b ( s, x'(s), x(s) )]\Big]d s
 \\&+\int_0^t \Big[\rho_0\mathbb E'[g ( s, X' (s), X ( s ) )]+g_0(t)
 +(\rho-\rho_0)\mathbb E'[g ( s, x' (s), x ( s ) )]\Big ] d W (s) .
\end{split}
\end{eqnarray}
 admits a unique solution $X (\cdot) \in {\cal M}_{\mathscr F}^2 ( 0, T; V)$.
 Consequently, now we can define a mapping from $ {\cal M}_{\mathscr F}^2 ( 0, T; V)$ onto itself
and denote by $X (\cdot) = \Gamma (x (\cdot))$.

 In view of the Lipschitz
continuity of $b$ and $g$ and  a priori estimate \eqref{eq:3.111}, for any $x_i (\cdot) \in {\cal M}_{\mathscr F}^2 ( 0, T; V)$, $i = 1, 2$,
we obtain
\begin{eqnarray*}
|| \Gamma ( x_1 (\cdot) ) - \Gamma ( x_2 (\cdot) ) ||^2_{{\cal M}_{\mathscr F}^2 ( 0, T; V)}
&=& || X_1 (\cdot) - X_2(\cdot) ||^2_{{\cal S}_{{\cal M}_{\mathscr F}^2 ( 0, T; V)} }\\
&\leq& K |\rho-\rho_0| \cdot || x_1 (\cdot) - x_2(\cdot) ||^2_{{\cal S}_{{\cal M}_{\mathscr F}^2 ( 0, T; V)} } \ ,
\end{eqnarray*}
Here $K \triangleq K (T, C, \lambda, \alpha)$ is a  positive constant independent of $\rho$. Set $\theta=\frac{1}{2K}.$ Then we  conclude that as long as $| \rho - \rho_0 |^2\leq \theta $,  the mapping $\Gamma$ is
a contraction in ${\cal M}_{\mathscr F}^2 ( 0, T; V)$
which implies  that MF-SPDE \eqref{eq:3.12}
is solvable. It is well-known that
 the MF-SPDE \eqref{eq:3.12} with $\rho_0 = 0$ admits a unique solution by the classic
 existence and uniqueness theory  of SPDE (see e.g. Prévôt and Röckner (2007)). Starting from $\rho = 0$, one can reach $\rho = 1$ in finite steps and this finishes
 the proof of solvability of the MF-SPDE \eqref{eq:3.12}.
Moreover,  from Lemma \ref{lem:1.2} and  the a priori estimate \eqref{eq:3.10},
we obtain $ X(\cdot) \in {\cal S}_{\mathscr F}^2 ( 0, T; V )$. This completes the proof.

\end{proof}
 We conclude this section by
studying another type of MF-SPDE
  in the following abstract stochastic evolution form:
 \begin{eqnarray}\label{eq:3.19}
\left
\{\begin{aligned}
d X(t) = & \ \big\{ - A (t) X (t)+ b ( t,  EX (t), X ( t  ) \big\} d t
+ g( t,
\mathbb EX(t), X (t) ) d W(t) \ , \quad t \in [ 0, T ] \ , \\
X (0) = & \ x \in  H \ ,
\end{aligned}
\right.
\end{eqnarray}
where the coefficients
 \begin{eqnarray} \label{eq:3.20}
  A&=&A(t,\omega) : [ 0, T ] \times \nonumber
\Omega \rightarrow {\mathscr L} ( V, V^* ),\nonumber
\\b&=&b(t,\omega, x', x) : [0, T] \times
\Omega \times H \times H \rightarrow H,
\nonumber
\\g&=&g(t,\omega, x', x) : [0, T] \times
\Omega \times H \times H \rightarrow H
\end{eqnarray}
are given random mappings.

We make the  following  standard assumptions on the coefficients $(A,f, \xi).$
\begin{ass}\label{ass:3.2}
Suppose that there exist constant
$\alpha>0, \lambda, $ and $C$
such that the following  conditions
holds for all $x,x', \bar x, \bar x'\in H$
and  a.e. $( t,\omega ) \in [ 0, T ] \times \Omega$
\begin{enumerate}
\item[(i)](Measurability) The operator $A$ is ${\mathscr P} / {\mathscr B} ( {\mathscr L} ( V, V^* ) )$
measurable; $b$ and $g $ are ${\mathscr P} \otimes
{\mathscr B} (H) \otimes {\mathscr B} (H) / {\mathscr B} (H)$ measurable;
\item[(ii)] (Integrality) $b ( \cdot, 0, 0 ), g ( \cdot, 0, 0 )\in {\cal M}_{\bar {\mathscr F}}^2 ( 0, T; H )$;
\item[(iii)] (Coercivity)
\begin{eqnarray} \label{eq:2.22}
\left < A (t) x, x \right > + \lambda \| x \|^2_H \geq \alpha \| x \|^2_V \  ;
\end{eqnarray}
\item[(iv)](Boundedness)
\begin{eqnarray}
\sup_{( t, \omega ) \in [0, T] \times \Omega} \| A ( t,\omega ) \|_{{\mathscr L} ( V, V^* )} \leq C \ ;
\end{eqnarray}
\item[(iv)] (Lipschitz Continuity)
\begin{eqnarray}\label{eq:2.40}
\| b ( t, x', x) - b ( t, {\bar x'}, {\bar x} ) \|_H
+ \| g ( t, x', x ) - g ( t, {\bar x'}, {\bar x} ) \|_H
\leq C [ \| x - {\bar x} \|_H + \| x' - {\bar x}' \|_H ] \ .
\end{eqnarray}
\end{enumerate}
\end{ass}
Similar to  Theorem \ref{lem:1.2} and
 \ref{lem:1.1},  we have
the following two
important results on the
solution to MF-SPDE \eqref{eq:3.19}.

\begin{thm}\label{lem:3.3}
Let Assumption \ref{ass:3.2} be satisfied.
Then for any given initial value $x,$
the MF-SPDE  \eqref{eq:3.19} has a unique
solution $X (\cdot) \in S_{\mathscr F}^2 ( 0, T; H )$.
\end{thm}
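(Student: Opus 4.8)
The plan is to mirror the two-step strategy used for Theorems \ref{lem:1.2} and \ref{lem:1.1}: first establish a continuous dependence (a priori) estimate for solutions of \eqref{eq:3.19}, then deduce uniqueness from it and existence by the method of continuity. The only structural change from the earlier setting is that the mean-field term now enters through the deterministic element $\mathbb{E}X(t)\in H$ rather than through the product-space copy $X'(t)$; the single observation that makes all the estimates carry over is Jensen's inequality $\|\mathbb{E}X(t)\|_H\leq\mathbb{E}\|X(t)\|_H$.

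First I would prove the a priori estimate. Let $X$ and $\bar X$ solve \eqref{eq:3.19} with coefficients $(A,b,g)$ and $(A,\bar b,\bar g)$ and initial data $x,\bar x$, and set $\hat X=X-\bar X$. Applying It\^o's formula to $\|\hat X(t)\|_H^2$ in the Gelfand triple, I would exploit coercivity to extract the good term $2\alpha\int_0^t\|\hat X(s)\|_V^2\,ds$ and use the Lipschitz continuity of $b$ and $g$ to bound the drift and diffusion differences. The crucial point is that $b(t,\mathbb{E}X,X)-b(t,\mathbb{E}\bar X,\bar X)$ is controlled by $C[\|\hat X\|_H+\|\mathbb{E}X-\mathbb{E}\bar X\|_H]$, and Jensen gives $\|\mathbb{E}X-\mathbb{E}\bar X\|_H=\|\mathbb{E}\hat X\|_H\leq\mathbb{E}\|\hat X\|_H$; hence, after taking expectations, every mean-field contribution is absorbed into $\mathbb{E}\|\hat X(s)\|_H^2$. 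Gronwall's inequality then bounds $\sup_t\mathbb{E}\|\hat X(t)\|_H^2+\mathbb{E}\int_0^T\|\hat X(t)\|_V^2\,dt$, and a Burkholder--Davis--Gundy estimate on the stochastic integral upgrades the bound to one on $\mathbb{E}[\sup_t\|\hat X(t)\|_H^2]$, exactly as in the passage leading to \eqref{eq:3.111}.

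With this estimate uniqueness is immediate. For existence I would run the continuation argument of Theorem \ref{lem:1.1}: embed \eqref{eq:3.19} into the family parameterized by $\rho\in[0,1]$,
\[
dX(t)=\{-A(t)X(t)+\rho\,b(t,\mathbb{E}X(t),X(t))+b_0(t)\}\,dt+\{\rho\,g(t,\mathbb{E}X(t),X(t))+g_0(t)\}\,dW(t),
\]
with $b_0,g_0\in{\cal M}_{\mathscr F}^2(0,T;H)$. The case $\rho=0$ is a classical non-mean-field SPDE, solvable by the theory cited in the proof of Theorem \ref{lem:1.1}. Assuming solvability at some $\rho_0$, I would freeze the mean-field coupling at a given $x(\cdot)\in{\cal M}_{\mathscr F}^2(0,T;V)$, solve the resulting equation, and use the a priori estimate to show that the induced map $\Gamma:x(\cdot)\mapsto X(\cdot)$ is a contraction on ${\cal M}_{\mathscr F}^2(0,T;V)$ whenever $|\rho-\rho_0|$ is below a fixed step $\theta=1/(2K)$ independent of $\rho_0$. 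Iterating finitely many times reaches $\rho=1$ with $b_0=g_0\equiv0$, giving the unique solution of \eqref{eq:3.19}; the estimate \eqref{eq:3.10} then places it in $S_{\mathscr F}^2(0,T;H)$.

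The main obstacle here is not a new analytic difficulty but the verification that the $\mathbb{E}X(t)$-type nonlinearity still produces a Lipschitz bound whose mean-field part is dominated by $\mathbb{E}\|\hat X\|_H^2$, so that the Gronwall and contraction constants stay independent of the mean-field coupling. Jensen's inequality is exactly what secures this; once it is in place, the rest of the proof is a direct transcription of the arguments for Theorems \ref{lem:1.2} and \ref{lem:1.1}.
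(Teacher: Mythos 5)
Your proposal is correct and follows exactly the route the paper intends: the paper omits an explicit proof of this theorem, stating only that it is ``similar to'' Theorems \ref{lem:1.2} and \ref{lem:1.1}, and your argument is precisely that adaptation (a priori estimate via It\^o's formula, coercivity, Gronwall and Burkholder--Davis--Gundy, then uniqueness from the estimate and existence by the parameter-continuation contraction). The one detail the transcription genuinely requires --- absorbing the $\mathbb{E}X$-type Lipschitz terms via Jensen's inequality $\|\mathbb{E}\hat{X}\|_H \le \mathbb{E}\|\hat{X}\|_H$ so that the Gronwall and contraction constants are unaffected --- is identified and handled correctly.
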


\begin{thm} \label{thm:3.4}
Let Assumption \ref{ass:3.2} be satisfied.
    Suppose that $X (\cdot)$ be the solution to MF-SPDE \eqref{eq:3.19} with initial value $x\in H.$
   Then  the following estimate holds:
\begin{eqnarray}\label{eq:c6}
\begin{split}
 {\mathbb E} \bigg [ \sup_{0 \leq t \leq T} \|X(t) \|_H^2 \bigg
]
+  {\mathbb E} \bigg [ \int_0^T \| X (t) \|_V^2 d t \bigg ]
 \leq K \bigg \{  {\mathbb E} [ \| x\|^2_H ] + \bar {\mathbb E}
\bigg [ \int_0^T \| b ( t, 0, 0 ) \|_H^2 d t \bigg ] + {\mathbb E}
\bigg [ \int_0^T \| g ( t, 0, 0 ) \|_H^2 d t \bigg ]\bigg \} \ ,
\end{split}
\end{eqnarray}
where $K$  is a positive constant depending only on $T, C, \alpha$ and
$\lambda.$
 Further, suppose that  ${\bar X} (\cdot)$ is  the
solution to MF-SPDE  \eqref{eq:3.19}
 with  the coefficients  $( A, {\bar b}, {\bar g})$ satisfying Assumption \ref{ass:3.2}
 and the  initial value $\bar x\in H.$
   Then we have
\begin{eqnarray}\label{eq:c2}
\begin{split}
& {\mathbb E} \bigg [ \sup_{0 \leq t \leq T} \| X(t) - {\bar X} (t) \|_H^2 \bigg ]
+ {\mathbb E} \bigg [ \int_0^T \| X (t) - {\bar X} (t) \|_V^2 d t \bigg ]  \\
& \leq K \bigg \{ {\mathbb  E}\bigg [ \int_0^T \| b ( t, \mathbb E{\bar X} (t), {\bar X} ( t) )
- {\bar b}( t, \mathbb E{\bar X} (t), {\bar X} ( t ) ) \|_H^2 d t \bigg ] + {{\mathbb E}} \bigg [ \int_0^T \| g ( t, \mathbb E{\bar X} (t),
{\bar X} ( t ) ) - {\bar g}( t, \mathbb E{\bar X} (t), {\bar X} ( t ) ) \|_H^2 d t \bigg ]\bigg\} \ .
\end{split}
\end{eqnarray}
\end{thm}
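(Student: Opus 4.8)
The plan is to mirror the proof of Theorem~\ref{lem:1.2} essentially verbatim, since the evolution-form equation \eqref{eq:3.19} differs from \eqref{eq:2.1} only in that the mean-field dependence enters through the ordinary expectation $\mathbb{E}X(t)$ rather than through the product-space expectation $\mathbb{E}'[\cdot]$. I would prove a single master estimate that keeps track of the initial-data difference, and then read off both \eqref{eq:c6} and \eqref{eq:c2} from it. Concretely, writing $\hat X=X-\bar X$, the estimate \eqref{eq:c2} is the case in which the initial values coincide (so the $\mathbb{E}\|x-\bar x\|_H^2$ term drops, exactly as in \eqref{eq:3.111}), while \eqref{eq:c6} is the case $(\bar b,\bar g,\bar x)=(0,0,0)$, for which uniqueness (Theorem~\ref{lem:3.3}) forces $\bar X\equiv 0$ and the surviving initial term is $\mathbb{E}\|x\|_H^2$.

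Set $\hat b(t)=b(t,\mathbb{E}\bar X(t),\bar X(t))-\bar b(t,\mathbb{E}\bar X(t),\bar X(t))$ and $\hat g(t)=g(t,\mathbb{E}\bar X(t),\bar X(t))-\bar g(t,\mathbb{E}\bar X(t),\bar X(t))$. Applying It\^o's formula in the Gelfand triple to $\|\hat X(t)\|_H^2$ and using the coercivity condition of Assumption~\ref{ass:3.2} to absorb $-2\langle A(s)\hat X(s),\hat X(s)\rangle$ into $-2\alpha\|\hat X(s)\|_V^2+2\lambda\|\hat X(s)\|_H^2$, I would split each coefficient difference as
\[
b(s,\mathbb{E}X,X)-\bar b(s,\mathbb{E}\bar X,\bar X)=\big[b(s,\mathbb{E}X,X)-b(s,\mathbb{E}\bar X,\bar X)\big]+\hat b(s),
\]
and analogously for $g$. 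The Lipschitz bound of Assumption~\ref{ass:3.2} controls the first bracket by $C(\|\hat X\|_H+\|\mathbb{E}\hat X\|_H)$, and the one genuinely new step is to route the mean-field term through Jensen's inequality, $\|\mathbb{E}\hat X(s)\|_H^2\le \mathbb{E}\|\hat X(s)\|_H^2$, so that after applying $2ab\le \varepsilon a^2+\varepsilon^{-1}b^2$ every cross term is expressible through $\|\hat X(s)\|_H^2$, $\mathbb{E}\|\hat X(s)\|_H^2$, $\|\hat b(s)\|_H^2$ and $\|\hat g(s)\|_H^2$.

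Taking expectations kills the stochastic integral and, via Jensen, collapses the $\|\mathbb{E}\hat X\|_H^2$ contributions into $\mathbb{E}\|\hat X(s)\|_H^2$; Gr\"onwall's inequality then yields
\[
\sup_{0\le t\le T}\mathbb{E}\|\hat X(t)\|_H^2+\mathbb{E}\int_0^T\|\hat X(t)\|_V^2\,dt\le K\Big\{\mathbb{E}\|x-\bar x\|_H^2+\mathbb{E}\int_0^T\|\hat b(t)\|_H^2\,dt+\mathbb{E}\int_0^T\|\hat g(t)\|_H^2\,dt\Big\},
\]
with $K=K(T,C,\alpha,\lambda)$. To pass from $\sup_t\mathbb{E}$ to $\mathbb{E}\sup_t$ I would return to the pathwise identity, take the supremum over $t$, and apply the Burkholder--Davis--Gundy inequality to the martingale term, dominating it by $\tfrac12\,\mathbb{E}[\sup_t\|\hat X(t)\|_H^2]$ plus the already-controlled right-hand side, and then absorb the half back to the left. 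Combining gives \eqref{eq:c2}, and the specialization $\bar X\equiv 0$ gives \eqref{eq:c6}.

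I expect no substantive obstacle: the argument is structurally identical to Theorem~\ref{lem:1.2}, and the only point requiring care is the Jensen step $\|\mathbb{E}\hat X\|_H\le(\mathbb{E}\|\hat X\|_H^2)^{1/2}$, which here replaces the Fubini bookkeeping $\bar{\mathbb{E}}=\mathbb{E}\,\mathbb{E}'$ of the product-space formulation and is precisely what lets Gr\"onwall's inequality close.
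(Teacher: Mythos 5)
Your proposal is correct and is essentially the paper's own argument: the paper offers no separate proof of Theorem \ref{thm:3.4}, asserting only that it follows ``similar to'' Theorems \ref{lem:1.2} and \ref{lem:1.1}, and your adaptation of that earlier proof---with the Jensen step $\|\mathbb{E}\hat X(s)\|_H^2\le\mathbb{E}\|\hat X(s)\|_H^2$ replacing the product-space Fubini bookkeeping $\bar{\mathbb{E}}=\mathbb{E}\,\mathbb{E}'$ so that Gr\"onwall still closes---is exactly the intended modification. Your reading of \eqref{eq:c2} as the equal-initial-data case and of \eqref{eq:c6} as the specialization $(\bar b,\bar g,\bar x)=(0,0,0)$ with $\bar X\equiv 0$ likewise mirrors the structure of the proof of Theorem \ref{lem:1.2}.
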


\section{ Mean-Field Backward Stochastic
Partial  Differential Equation}

In this section, in Gelfand triple $(V, H, V^*),$ we begin to investigate the
MF-BSPDE in the following abstract stochastic evolution form:
\begin{eqnarray}\label{eq:2.7}
\left\{
\begin{aligned}
d Y(t) =& \ \big[ A (t) Y (t) +  \mathbb E'[ f ( t, Y '(t), Z' (t), Y  (t), Z  (t) ) \big]d t  + Z (t) d W (t) ,  \quad t \in [0, T] , \\
Y(T) =& \ \xi ,
\end{aligned}
\right.
\end{eqnarray}
where the coefficients $( A, f, \xi )$ are  the  following mappings
\begin{eqnarray} \label{eq:4.2}
  A&=&A(t,\omega) : [ 0, T ] \times \nonumber
\Omega \rightarrow {\mathscr L} ( V, V^* ),\nonumber
\\f&=&f(t,\omega', \omega, y',z', y, z) : [0, T] \times
\bar \Omega \times V \times H \times V \times H \rightarrow H,
\nonumber
\\
\xi&=&\xi(\omega): \Omega \rightarrow H.
\end{eqnarray}
In the above, we have used the following notation defined by
\begin{eqnarray}
   \mathbb E'[f ( t,  Y' (t), Z'( t),  Y (t), Z( t) ]=
   \int_\Omega f(t, \omega', \omega, Y'(t,\omega'),Z'(t,\omega'),
    Y(t,\omega), Z(t,\omega')) P(d \omega')
\end{eqnarray}
 Furthermore, we make
 the following
 standard assumption
 on the coefficients $(A, f, \xi).$

\begin{ass}\label{ass:1.3}
Suppose that there exist constant
$\alpha>0, \lambda, $ and $C$
such that the following  conditions
holds
for all $( y', z', y, z ), ( {\bar y'}, {\bar z'}, {\bar y},
{\bar z} ) \in V \times H \times V \times H$
and a.e. $( t, \omega', \omega ) \in [ 0, T ] \times \bar\Omega.$
\begin{enumerate}
\item[(i)](Measurability) The operator $A$ is
${\mathscr P} / {\mathscr B} ( {\mathscr L} ( V, V^* ) )$-measurable;
$f$ is $\bar {\mathscr P} \otimes {\mathscr B} (V) \otimes {\mathscr B}(H) \otimes {\mathscr B} (V) \otimes
{\mathscr B} (H) / {\mathscr B} (H)$-measurable; $\xi$ is ${\mathscr F}_T$-measurable.;
\item[(ii)] (Integrality)  $f ( \cdot, 0, 0, 0, 0 ) \in {\cal M}_{\mathscr F}^2 ( 0, T; H )$ and
$\xi \in {\cal L}^2 ( {\mathscr F}_T; H )$;
\item[(ii)] (Coercivity)
\begin{eqnarray} \label{eq:2.22}
\left < A (t) x, x \right > + \lambda \| x \|^2_H \geq \alpha \| x \|^2_V \  ;
\end{eqnarray}
\item[(iv)](Boundedness)
\begin{eqnarray}
\sup_{( t, \omega ) \in [0, T] \times \Omega} \| A ( t,\omega ) \|_{{\mathscr L} ( V, V^* )} \leq C \ ;
\end{eqnarray}
\item[(iv)] (Lipschitz Continuity)

\begin{equation}\label{eq:2.18}
\| f ( t, y', z',  y, z ) - f ( t, {\bar y'}, {\bar z'}, {\bar y}, {\bar z}) \|_H
\leq C ( \| y' - {\bar y'} \|_V + \| z' - {\bar z'} \|_H + \| y - {\bar y} \|_V
+ \| z - {\bar z}\|_H ) .
\end{equation}

\end{enumerate}
\end{ass}
Now we give  the definition of  the solutions to
MF-SPDE \eqref{eq:2.7}.
\begin{defn}\label{defn:c1}
A  $(V \times H)$-valued, ${\mathbb F}$-adapted process pair $( Y (\cdot), Z (\cdot) )$
is said to be  a solution to the MF-BSPDE \eqref{eq:2.7}, if $Y (\cdot) \in {\cal M}_{\mathscr F}^2 ( 0, T; V )$
and $ Z (\cdot) \in {\cal M}_{\mathscr F}^2 ( 0, T; H )$ such that
\begin{eqnarray}\label{eq:c5}
( Y (t),  \phi )_H  &=& (\xi, \phi)_H
- \int_t^T  \mathbb E'[ f ( s, Y' (s), Z' (s), Y(s),
Z (s) ),  \phi )_H ]d s \nonumber \\
&& - \int_t^T \left < A (s) Y(s), \phi \right > d s - \int_t^T ( Z (s), \phi )_H d W (s) , \quad t \in [0, T] ,
\end{eqnarray} holds for every $\phi \in V$ and
a.e. $( t, \omega ) \in [ 0, T ] \times \Omega$,
or alternatively,
in the sense of $V^*$, $( Y (\cdot), Z (\cdot) )$
satifies  the following It\^o form:
\begin{eqnarray}
Y(t) &=& \xi - \int_t^T {\mathbb E'} [ f ( s, Y' (s), Z' (s), Y(s), Z(s) )] d s \nonumber \\
&& - \int_t^T A(s) Y (s) d s - \int_t^T Z (s) d W (s) ,  \quad t \in [0, T] .
\end{eqnarray}
\end{defn}
The following result is
the continuous dependence theorem for the solution
 to the MF-BSPDE \eqref{eq:2.7}
  with respect to the coefficients $(A,f, \xi),$  which also is referred to as a priori
estimate for the solution.
\begin{thm}[{\bf Continuous dependence theorem of MF-BSPDE}]\label{lem:1.4}
Suppose that  $( Y (\cdot), Z (\cdot) )$
is the solution to the MF-BSPDE \eqref{eq:2.7}
 with the coefficients $( A, f, \xi )$ satisfying Assumption \ref{ass:1.3}, then
 we have the following a priori estimate
\begin{eqnarray}\label{eq:2.15}
&& {\mathbb E} \bigg [ \sup_{0 \leq t \leq T} \| Y (t) \|^2_H \bigg ]
+ {\mathbb E} \bigg [ \int_0^T \| Y (t) \|_V^2 d t \bigg ]
+ {\mathbb E} \bigg [ \int_0^T \| Z (t) \|^2_H d t \bigg ] \nonumber \\
&& \leq K \bigg \{ {\mathbb E} [ \| \xi \|^2_H ] + \bar {\mathbb E} \bigg
[ \int_0^T \| f ( t, 0, 0, 0, 0 ) \|^2_H d t \bigg ] \bigg \} .
\end{eqnarray}
Here  $K \triangleq K ( T, C, \alpha, \lambda )$ is a positive constant depending only on $T$, $C$, $\alpha$ and $\lambda$.
 Assume that $( {\bar Y} (\cdot), {\bar Z} (\cdot) )$ is the solution to the
MF-BSPDE\eqref{eq:2.7}  with
the coefficients $(A^*, {\bar f}, {\bar \xi})$
 satisfying Assumption \ref{ass:1.3},
then it holds that
\begin{eqnarray}\label{eq:2.14}
&& {\mathbb E} \bigg [ \sup_{0 \leq t \leq T} \| Y (t) - {\bar Y} (t) \|_H^2 \bigg ]
+ {\mathbb E} \bigg [ \int_0^T \| Y (t) - {\bar Y} (t) \|_V^2 d t \bigg ]
+ {\mathbb E} \bigg [ \int_0^T \| Z (t) - {\bar Z} (t) \|^2_H d t \bigg ] \nonumber \\
&& \leq K \bigg \{ \mathbb  E [ \| \xi - {\bar \xi} \|_H^2 ]
+ \bar {\mathbb E} \bigg [ \int_0^T \| f ( t, {\bar Y'} (t), {\bar Z'} (t),{\bar Y}  (t),
{\bar Z}  (t) )- {\bar f} ( t, {\bar Y'} (t), {\bar Z'} (t),{\bar Y}  (t),{\bar Z} (t) ) \|^2_H d t \bigg ] \bigg \} .
\end{eqnarray}
\end{thm}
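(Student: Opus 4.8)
The plan is to follow the proof of Theorem \ref{lem:1.2}, adapting each step to the backward setting. As there, it suffices to establish the continuous dependence estimate \eqref{eq:2.14}, since the a priori bound \eqref{eq:2.15} is recovered from it by taking the comparison coefficients $(\bar f, \bar\xi) = (0, 0)$, whose solution is $(\bar Y(\cdot), \bar Z(\cdot)) = (0, 0)$. Write $\hat Y = Y - \bar Y$, $\hat Z = Z - \bar Z$, $\hat\xi = \xi - \bar\xi$, and let $\hat f(t) = f(t, \bar Y', \bar Z', \bar Y, \bar Z) - \bar f(t, \bar Y', \bar Z', \bar Y, \bar Z)$ denote the driver mismatch appearing on the right-hand side of \eqref{eq:2.14}. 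I treat the operator as common to both equations, so that the difference equation carries no operator-mismatch term; the $A^*$ written in the comparison equation is understood in this sense (otherwise an extra term in $(A - A^*)\bar Y$ would be needed on the right of \eqref{eq:2.14}).

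First I would apply It\^o's formula to $\|\hat Y(t)\|_H^2$ on $[t, T]$. Unlike the forward case, this yields the favorable quadratic-variation term $\int_t^T \|\hat Z(s)\|_H^2 ds$ on the left together with a genuine martingale $\int_t^T 2(\hat Z(s), \hat Y(s))_H dW(s)$, while the coercivity condition of Assumption \ref{ass:1.3} supplies $-2\alpha\int_t^T\|\hat Y(s)\|_V^2 ds$ (which, moved to the left, controls the $V$-norm) at the expense of a term $2\lambda\int_t^T\|\hat Y(s)\|_H^2 ds$ to be absorbed by Gr\"onwall. The driver contribution $2\int_t^T(\mathbb E'[\hat f_{\mathrm{full}}(s)], \hat Y(s))_H ds$, where $\hat f_{\mathrm{full}}$ is the full difference of the two drivers evaluated at the respective solutions, is split by the triangle inequality into a Lipschitz part and the source $\hat f$ using \eqref{eq:2.18}.

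The delicate step, and the main departure from the forward proof, is the Young-inequality bookkeeping. Since $f$ is Lipschitz in the $V$-norm of its $y$-arguments and the $H$-norm of its $z$-arguments, Cauchy--Schwarz in $H$ followed by $2ab \le \eps a^2 + \eps^{-1} b^2$ generates products $\|\hat Y\|_V \|\hat Y\|_H$ and $\|\hat Z\|_H \|\hat Y\|_H$. I would choose the splitting parameters small enough that the resulting $\|\hat Y\|_V^2$ contribution is absorbed into the coercivity term $2\alpha\int\|\hat Y\|_V^2$ and the $\|\hat Z\|_H^2$ contribution into the quadratic-variation term $\int\|\hat Z\|_H^2$, leaving only multiples of $\|\hat Y\|_H^2$ (and the source) on the right. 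The primed arguments $\hat Y', \hat Z'$ are converted to unprimed ones under expectation via the identity $\bar{\mathbb E}[\theta] = \mathbb E[\mathbb E'[\theta]]$: since the primed process is an independent copy, $\mathbb E\big[\mathbb E'[\|\hat Y'(s)\|_V^2]\big] = \mathbb E[\|\hat Y(s)\|_V^2]$, and likewise for $\hat Z'$. Taking expectations then kills the martingale and leaves a backward Gr\"onwall inequality for $\mathbb E\|\hat Y(t)\|_H^2$, whose integration bounds $\sup_{t}\mathbb E\|\hat Y(t)\|_H^2 + \mathbb E\int_0^T\|\hat Y\|_V^2 dt + \mathbb E\int_0^T\|\hat Z\|_H^2 dt$ by the right-hand side of \eqref{eq:2.14}.

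Finally, to upgrade $\sup_t\mathbb E\|\hat Y(t)\|_H^2$ to $\mathbb E\sup_t\|\hat Y(t)\|_H^2$, I would return to the It\^o identity, take the supremum over $t$ before the expectation, and control the martingale by the Burkholder--Davis--Gundy inequality, $\mathbb E[\sup_t|\int_t^T 2(\hat Z, \hat Y)_H dW|] \le C\,\mathbb E[(\int_0^T \|\hat Z\|_H^2\|\hat Y\|_H^2 ds)^{1/2}] \le \frac{1}{2}\mathbb E[\sup_t\|\hat Y\|_H^2] + C'\,\mathbb E\int_0^T\|\hat Z\|_H^2 ds$. The first term is absorbed on the left and the second is already controlled by the previous step, which gives \eqref{eq:2.14}. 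I expect the constant-balancing in the Young step — arranging simultaneous absorption of both the $V$-norm and the $Z$-norm without exhausting the coefficient $2\alpha$ or the unit coefficient of the $\hat Z$-term — to be the principal technical obstacle.
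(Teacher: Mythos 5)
Your proposal is correct and follows essentially the same route as the paper's own proof: reduce to the difference estimate by taking $(\bar f,\bar\xi)=(0,0)$, apply It\^o's formula to $\|\hat Y(t)\|_H^2$, use coercivity plus Young's inequality with a small parameter $\eps$ (chosen so that $2\alpha-2\eps>0$ and $1-2\eps>0$) to absorb the $V$-norm and $\hat Z$-norm contributions, convert primed arguments via $\bar{\mathbb E}[\theta]=\mathbb E[\mathbb E'[\theta]]$, take expectations and apply the backward Gr\"onwall inequality, and finally upgrade to $\mathbb E[\sup_t\|\hat Y(t)\|_H^2]$ via Burkholder--Davis--Gundy with the standard $\tfrac12$-absorption. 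Your reading of the $A^*$ in the statement as the same operator $A$ (a typo) is also consistent with the paper, whose proof tacitly uses a common operator in the difference equation.
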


\begin{proof}
If we take the coefficients $( A, {\bar f}, {\bar \xi} ) = (A, 0, 0)$,
then the corresponding solution to the
MF-BSPDE \eqref{eq:2.7} is $(\bar Y(\cdot),\bar Z(\cdot)) = (0, 0)$
and the estimate \eqref{eq:2.15} follows from the estimate \eqref{eq:2.14} immediately.
Therefore, it suffices to prove that \eqref{eq:2.14} holds. To simplify our notation, we define
\begin{eqnarray*}
&& {\hat Y} (t) \triangleq Y (t) - {\bar Y} (t) , \quad
{\hat Z} (t) \triangleq Z (t) - {\bar Z} (t) , \quad
{\hat \xi} \triangleq \xi - {\bar \xi} , \\
&& {\hat f} (t) \triangleq f ( t, {\bar Y'} (t), {\bar Z'} (t), {\bar Y} (t),
{\bar Z} (t) )- {\bar f} ( t, {\bar Y'} (t), {\bar Z'} (t),{\bar Y} (t),{\bar Z} (t) ) .
\end{eqnarray*}
Using It\^o's formula to $\| {\hat Y} (t) \|_H^2$
and Assumption \ref{ass:1.3} and the classic inequality  leads to  $2
a b \leq \frac{1}{\epsilon} a^2 + \epsilon b^2$, $\forall a, b > 0$,
$\epsilon > 0$, we  have
\begin{eqnarray}\label{eq:2.24}
&& \| {\hat Y} (t) \|_H^2 + 2 \alpha \int_t^T \| {\hat Y} (s) \|_V^2 d s + \int_t^T \| {\hat Z} (s) \|_H^2 d s \nonumber \\
&& \leq \| {\hat \xi} \|_H^2 + K (  C,\lambda, \epsilon ) \int_t^T \| {\hat Y} (s) \|_H^2 d s
+ \epsilon \int_t^T \| {\hat Y} (s) \|_V^2 d s + \epsilon \int_t^T \| {\hat Z} (s) \|_H^2 d s \nonumber \\
&& \quad  +K ( C, \epsilon ) \mathbb E
\bigg[\int_t^T \|{\hat Y} (s) \|_H^2 d s\bigg] + \epsilon \mathbb E\bigg[\int_t^T \|
{\hat Y}(s) \|_V^2 d s\bigg]
+ \epsilon \mathbb E\bigg[\int_t^T \|{\hat Z} (s) \|_H^2 d s\bigg] \nonumber \\
&& \quad +
 \mathbb E'\int_t^T \| {\hat f} (s) \|_H^2 d s - 2 \int_t^T ( {\hat Y} (s), {\hat Z} (s) )_H d W (s).
\end{eqnarray}

By taking expectations on both sides
of \eqref{eq:2.24}  and  taking $\epsilon$ small enough such that $2 \alpha - 2 \epsilon
> 0$ and $1 - 2 \epsilon > 0,$
we have
\begin{eqnarray}\label{leq:c4}
&& {\mathbb E} [ \| {\hat Y} (t) \|_H^2 ]
+ {\mathbb E} \bigg [ \int_t^T \| {\hat Y} (s) \|_V^2 d s \bigg ]
+ {\mathbb E} \bigg [ \int_t^T \| {\hat Z} (s) \|_H^2 d s \bigg ] \nonumber \\
&& \leq K ( T, C, \alpha, \lambda )  \bigg \{ \mathbb E \| {\hat \xi} \|_H^2 +
\bar {\mathbb E}\int_t^T \| {\hat f} (s) \|_H^2 d s + \mathbb E\int_t^T \| {\hat Y}(s) \|_H^2 d s \bigg \} .
\end{eqnarray}
 Here $K ( T, C, \alpha, \lambda )$ is a general positive constant depending on $\alpha$, $T$, $C$, and $\lambda$.

Then by Gr\"onwall's inequality to \eqref{leq:c4},
we obtain
\begin{eqnarray}\label{leq:c5}
&& \sup_{0 \leq t \leq T} {\mathbb E} [ \| {\hat Y} (t) \|_H^2 ]
+ {\mathbb E} \bigg [ \int_0^T \| {\hat Y} (t) \|_V^2 d t \bigg ]
+ {\mathbb E} \bigg [ \int_0^T \| {\hat Z} (t) \|_{H}^{2} d t \bigg ] \nonumber \\
&& \leq K ( T, C, \alpha, \lambda ) \bigg \{ {\mathbb E} [ \| {\hat \xi} \|_H^2 ]
+\bar  {\mathbb E} \bigg [ \int_0^T \| {\hat f} (t) \|_H^2 d t \bigg ] \bigg \} .
\end{eqnarray}
 In view of \eqref{eq:2.24}, \eqref{leq:c5} and the Burkholder-Davis-Gundy inequality, we have
\begin{eqnarray}\label{eq:2.29}
{\mathbb E} \bigg [ \sup_{0 \leq t \leq T} \| {\hat Y} (t) \|_H^2 \bigg ]
&\leq& K ( T, C, \alpha, \lambda ) \bigg \{ {\mathbb E} [ \| {\hat \xi} \|_H^2 ]
+ \bar {\mathbb E} \bigg [ \int_0^T \| {\hat f} (t) \|_H^2 d t \bigg ] \bigg \} \nonumber \\
&&+ 2 {\mathbb E} \bigg [ \sup_{0 \leq t \leq T} \bigg | \int_t^T (
{\hat Y} (s), {\hat Z} (s) )_H d W (s) \bigg | \bigg ] \nonumber \\
&\leq & K ( T, C, \alpha, \lambda ) \bigg \{ {\mathbb E} [ \| {\hat \xi} \|_H^2 ] + {\mathbb E}
\bigg [ \int_0^T \| {\hat f} (t) \|_H^2 d t \bigg ] \bigg \}
\nonumber
\\&&+ \frac{1}{2} {\mathbb E} \bigg [ \sup_{0 \leq t \leq
T} \| {\hat Y} (t) \|_H^2 \bigg ] .
\end{eqnarray}
which implies that
\begin{eqnarray}\label{eq:2.29}
{\mathbb E} \bigg [ \sup_{0 \leq t \leq T} \| {\hat Y} (t) \|_H^2 \bigg ]
&\leq& K ( T, C, \alpha, \lambda ) \bigg \{ {\mathbb E} [ \| {\hat \xi} \|_H^2 ]
+ \bar {\mathbb E} \bigg [ \int_0^T \| {\hat f} (t) \|_H^2 d t \bigg ] \bigg \}
\end{eqnarray}
There we conclude that \eqref{eq:2.14} holds
by \eqref{eq:2.29} with \eqref{leq:c5}
The proof is complete.
\end{proof}

\begin{thm}[{\bf Existence and uniqueness theorem of MF-BSPDE}]\label{lem:1.3}
Let the coefficients $(A, f, \xi)$ satisfy Assumption \ref{ass:1.3},
 then MF-BSPDE \eqref{eq:2.7} admits a unique solution $( Y (\cdot), Z (\cdot) )
\in {\cal S}_{\mathscr F}^2 ( 0, T; V ) \times {\cal M}_{\mathscr F}^2 ( 0, T; H )$.
\end{thm}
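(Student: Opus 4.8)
The plan is to reproduce, in the backward setting, the argument of Theorem \ref{lem:1.1}: obtain uniqueness at once from the a priori estimate, and build existence by the method of continuation in a parameter, invoking the continuous dependence theorem (Theorem \ref{lem:1.4}) at each step to produce a contraction whose step length is independent of the current parameter value. Uniqueness is immediate: if $(Y_1,Z_1)$ and $(Y_2,Z_2)$ both solve \eqref{eq:2.7} with the same data $(A,f,\xi)$, then applying \eqref{eq:2.14} with $\bar f=f$, $\bar\xi=\xi$ makes the right-hand side vanish, forcing $Y_1=Y_2$ in ${\cal M}_{\mathscr F}^2(0,T;V)$ and $Z_1=Z_2$ in ${\cal M}_{\mathscr F}^2(0,T;H)$.

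For existence I would embed \eqref{eq:2.7} into the family of MF-BSPDE indexed by $\rho\in[0,1]$,
\[ Y(t)=\xi-\int_t^T\big[\rho\,\mathbb E'[f(s,Y'(s),Z'(s),Y(s),Z(s))]+f_0(s)\big]\,ds-\int_t^T A(s)Y(s)\,ds-\int_t^T Z(s)\,dW(s), \]
with an arbitrary inhomogeneous term $f_0\in{\cal M}_{\mathscr F}^2(0,T;H)$; the original equation is recovered at $\rho=1$, $f_0\equiv0$. Every member carries coefficients $(A,\rho f+f_0,\xi)$ still fulfilling Assumption \ref{ass:1.3} with the same Lipschitz constant $C$, so Theorem \ref{lem:1.4} applies uniformly along the family. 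At $\rho=0$ the mean-field coupling disappears and the equation reduces to the classical linear BSPDE $dY=[A(t)Y+f_0]\,dt+Z\,dW$, $Y(T)=\xi$, in the Gelfand triple, which is solvable for every $(\xi,f_0)$ by the standard backward SPDE theory.

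The continuation step carries the main work. Assume the family is solvable for every $(\xi,f_0)$ at some $\rho_0$. Fix $\rho$ near $\rho_0$ and, given any $(y,z)\in{\cal M}_{\mathscr F}^2(0,T;V)\times{\cal M}_{\mathscr F}^2(0,T;H)$, solve the $\rho_0$-equation whose inhomogeneous term is $f_0+(\rho-\rho_0)\mathbb E'[f(\cdot,y',z',y,z)]$; this defines a map $\Gamma(y,z)=(Y,Z)$ on that product space. Since the generators attached to two inputs differ only by the $(y,z)$-independent term $(\rho-\rho_0)\mathbb E'[f(\cdot,y_1',z_1',y_1,z_1)-f(\cdot,y_2',z_2',y_2,z_2)]$, the a priori estimate \eqref{eq:2.14} (with $\hat\xi=0$) gives, after inserting the Lipschitz bound \eqref{eq:2.18} and using $\bar{\mathbb E}[\cdot]=\mathbb E[\mathbb E'[\cdot]]$ together with $\|\mathbb E'[\cdot]\|_H^2\le\mathbb E'\|\cdot\|_H^2$,
\[ \|\Gamma(y_1,z_1)-\Gamma(y_2,z_2)\|^2\le K|\rho-\rho_0|^2\,\|(y_1,z_1)-(y_2,z_2)\|^2 \]
in the ${\cal M}_{\mathscr F}^2(0,T;V)\times{\cal M}_{\mathscr F}^2(0,T;H)$-norm, with $K=K(T,C,\alpha,\lambda)$ independent of $\rho_0$. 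Choosing $|\rho-\rho_0|$ so small that $K|\rho-\rho_0|^2\le\tfrac12$ makes $\Gamma$ a contraction, hence solvability propagates from $\rho_0$ to a fixed-length neighbourhood; finitely many such steps reach $\rho=1$, producing a solution of \eqref{eq:2.7}, whose regularity $(Y,Z)\in{\cal S}_{\mathscr F}^2(0,T;V)\times{\cal M}_{\mathscr F}^2(0,T;H)$ is then read off from \eqref{eq:2.15}.

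The step I expect to be delicate is closing the contraction, because of a norm mismatch: $f$ is Lipschitz in its $y$-arguments only in the $V$-norm, so the forcing-difference is controlled by $\|y_i\|_V$ and $\|z_i\|_H$, yet that difference itself takes values only in $H$. The contraction can therefore succeed only because the a priori estimate \eqref{eq:2.14} returns genuine $V$-integrability of $Y$ (and $H$-integrability of $Z$) out of an $H$-valued forcing, which is precisely what the coercivity in Assumption \ref{ass:1.3} provides. A secondary technical point is passing the Lipschitz estimate through the mean-field averaging $\mathbb E'$ over the independent copy, handled by Jensen's inequality and the identity $\bar{\mathbb E}=\mathbb E\circ\mathbb E'$; this is what keeps the contraction constant $K$ free of $\rho_0$.
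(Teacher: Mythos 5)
Your proposal follows the paper's proof essentially step for step: uniqueness read off from the a priori estimate \eqref{eq:2.14}, existence by the continuation method on the same $\rho$-parameterized family with arbitrary $f_0\in{\cal M}_{\mathscr F}^2(0,T;H)$, the contraction map built by freezing the $(\rho-\rho_0)$-term at an input $(y,z)$, a step length independent of $\rho_0$ so that $\rho=1$ is reached in finitely many steps, the base case $\rho_0=0$ settled by classical (non-mean-field) BSPDE theory (the paper cites Du and Meng, 2010), and the final regularity from \eqref{eq:2.15}. Your closing remarks on the $V$-versus-$H$ norm mismatch and on passing the Lipschitz bound through $\mathbb E'$ via Jensen's inequality make explicit two points the paper handles implicitly, but the argument is the same.
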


\begin{proof}

The uniqueness of the solution
of MF-SPDE \eqref{eq:2.7} is
implied by the a priori estimate
\eqref{eq:2.14}.
 Consider a family of MF-BSPDE
 parameterized by  $\rho \in [0, 1]$ as
 follows:
\begin{eqnarray}\label{eq:1.7}
Y(t) &=& \xi - \int_t^T \bigg \{ A (s) Y (s) + \rho \mathbb E'[ f ( s, Y' (s), Z' (s), Y(s), Z  (s) ) ] \nonumber \\
&& + f_0 (s) \bigg \} d s -\int_t^T Z (s) d W (s) .
\end{eqnarray}
 where $f_0 (\cdot) \in {\cal M}^2_{{\mathscr F}} ( 0, T; H )$ is  an  arbitrary stochastic process.

 It is easily seen that the original  MF-SPDE
\eqref{eq:2.7} is ''embedded'' in
the MF-SPDE \eqref{eq:1.7}
when we take the parameter $\rho=1$ and
$f_0 (\cdot)\equiv 0.$
Obviously, the MF-SPDE \eqref{eq:1.7} have  coefficients $(A, \rho f + b_0, \xi)$
satisfying  Assumption \ref{ass:1.1}.
Suppose for some $\rho=\rho_0$ and  any $f_0 \in {\cal M}^2_{\mathscr F} (0, T; H)$, the MF-BSPDE \eqref{eq:1.7} admits a unique solution $( Y (\cdot), Z (\cdot) )
\in {\cal M}_{\mathscr F}^2 ( 0, T; V ) \times {\cal M}_{\mathscr F}^2 (0, T; H)$.
Then for any  $\rho$,   we can rewrite the MF-SPDE \eqref{eq:1.7} as follows
\begin{eqnarray}\label{eq:1.8}
Y(t) &=& \xi - \int_t^T \bigg\{ A (s) Y (s) + \rho_0 \mathbb E'[f ( s, Y' (s), Z' (s), Y (s),Z (s) )]
\nonumber \\
&& + f_0 (s) + ( \rho - \rho_0 )
  \mathbb E'[f ( s, Y' (s), Z' (s),  Y (s), Z (s) )] \bigg\} d s \nonumber \\
&& -\int_t^T Z(s) d W (s) .
\end{eqnarray}
Thus by  our above assumption, for any
stochastic process pair $( y (\cdot), z (\cdot) ) \in {\cal M}_{\mathscr F}^2 ( 0, T; V) \times {\cal M}_{\mathscr F}^2 ( 0, T; H )$,
the following MF-BSPDE
\begin{eqnarray}\label{eq:1.9}
Y (t) &=& \xi - \int_t^T \bigg \{ A (s) Y (s)
+ \rho_0 \mathbb E'[f ( s, Y' (s), Z' (s), Y (s),Z (s) )] \nonumber \\
&& + f_0 (s) + ( \rho - \rho_0 )
 \mathbb E'[f ( s, y' (s), z' (s), y (s), z (s) )] d s \nonumber \\
&& - \int_t^T Z (s) d W (s) ,
\end{eqnarray}
 admits a unique solution $( Y (\cdot), Z (\cdot) ) \in {\cal M}_{\mathscr F}^2 ( 0, T; V) \times {\cal M}_{\mathscr F}^2( 0, T; H)$.
 which imply that we can define a mapping  from ${\cal M}_{\mathscr F}^2 ( 0, T; V) \times {\cal M}_{\mathscr F}^2( 0, T; H)$ into
itself denoted by ${\cal I} ( y (\cdot), z (\cdot) ) = ( Y (\cdot), Z (\cdot) )$.

In view of the a priori estimate \eqref{eq:2.14} and the Lipschitz
continuity of $f,$ for any $( y_i (\cdot), z_i (\cdot) ) \in {\cal M}_{\mathscr F}^2 ( 0,
T; V ) \times {\cal M}_{\mathscr F}^2 ( 0, T; H ),$
 it holds that
\begin{eqnarray}
&&\| {\cal I}( x_1 (\cdot), z_1 (\cdot) ) - {\cal I}( _2 (\cdot), z_2 (\cdot) )
\|^2_{ {\cal M}_{\mathscr F}^2 ( 0, T; V) \times {\cal M}_{\mathscr F}^2 ( 0, T; H ) }
\\&&= \| ( Y_1 (\cdot), Z_1 (\cdot) ) - ( Y_2 (\cdot), Z_2 (\cdot) )
\|^2_{ {\cal M}_{\mathscr F}^2 ( 0, T; V) \times {\cal M}_{\mathscr F}^2 ( 0, T; H ) } \nonumber \\
&& \leq K \bar {\mathbb E} \bigg[\int_0^T  \bigg|\rho_0 f ( s, Y_2' (s), Z_2'
(s), Y_2  (s), Z_2 (s) )
+( \rho - \rho_0 )  f ( s, y_1' (s), z_1 '(s),
y_1  (s), z _1(s) ) \nonumber \\
&& \quad -\rho_0 f ( s, Y_2' (s), Z_2' (s),
Y_2  (s), Z_2  (s) )
-( \rho - \rho_0 )  f ( s, y_2' (s), z_2' (s),
y_2 (s), z_2  (s) ) \bigg|^2ds\bigg] \nonumber \\
&& \leq K | \rho - \rho_0 |^2 \times \| ( y_1 (\cdot), z_1 (\cdot) ) - ( y_2 (\cdot), z_2 (\cdot) )
\|^2_{ {\cal M}_{\mathscr F}^2 ( 0, T; V) \times {\cal M}_{\mathscr F}^2 (0, T; H) }.
\end{eqnarray}
 Here we note that $K \triangleq K (T, C, \lambda, \alpha)$ is a constant independent of $\rho$ and
\begin{eqnarray*}
\| ( Y_1 (\cdot), Z_1 (\cdot) ) - ( Y_2 (\cdot), Z_2 (\cdot) )
\|^2_{ {\cal M}_{\mathscr F}^2 ( 0, T; V ) \times {\cal M}_{\mathscr F}^2 ( 0, T; H ) }
\triangleq \| Y_1 (\cdot) - Y_2 (\cdot) \|^2_{ {\cal M}_{\mathscr F}^2 ( 0, T; V ) }
+ \| Z_1 (\cdot) - Z_2 (\cdot) \|^2_{ {\cal M}_{\mathscr F}^2 ( 0, T; H ) } .
\end{eqnarray*}
Set $\theta=\frac{1}{2K}.$ Then we  conclude that as long as $| \rho - \rho_0 |^2 \leq \theta $,  the mapping ${\cal I}$ is
a contraction in ${\cal M}_{\cal F}^2 ( 0, T; V ) \times {\cal M}_{\cal F}^2 ( 0, T; H )$
which implies  that MF-BSPDE \eqref{eq:1.7}
is solvable.
 In view of Proposition 3.2 in Du and Meng (2010), we know that the MF-BSPDE \eqref{eq:1.7} with $\rho_0 = 0$ admits a unique solution. Now we can start from $\rho = 0$ and then  reach $\rho = 1$ in finite steps which  finishes
 the proof of solvability of the MF-BSPDE \eqref{eq:1.7}.
Moreover,  from  the a priori estimate \eqref{eq:2.15},
we obtain $Y (\cdot) \in {\cal S}_{\cal F}^2 ( 0, T; H )$. This completes the proof.

\end{proof}

\section{Optimal Control  of
Mean-Field Stochastic Partial Differential Equation}

\subsection{Formulation of the Optimal Control Problem}

In this subsection, we present our optimal
control problem studied in this paper.
Firstly, in the Gelfand triple $( V, H, V^* ),$ consider the following controlled system:
\begin{eqnarray}\label{eq:5.1}
\left\{
\begin{aligned}
d X (t) =& \ [ - A (t) X(t) + h ( s, X (s),
\mathbb E [X (s)], u(s) ) ] d t + g ( s, X (s),\mathbb E [X (s)],u(s) ) d W (t) , \quad t \in [ 0, T ] , \\
X (0) =& \ x\in H,
\end{aligned}
\right.
\end{eqnarray}
with the cost functional
\begin{eqnarray}\label{eq:5.2}
J ( u (\cdot) ) = {\mathbb E} \bigg [ \int_0^T l ( s, X (s),
\mathbb E [X (s)] ) d t
+ \Phi ( X (T), \mathbb E [X(T)]) \bigg ] .
\end{eqnarray}
In the above,  $A: [ 0, T ] \times \Omega
\rightarrow {\mathscr L} ( V, V^* ),$
$h, g: [ 0, T ] \times \Omega \times H \times H \times {\mathscr U} \rightarrow H,$
$l: [ 0, T ] \times \Omega \times H \times H \times {\mathscr U} \times {\mathscr U} \rightarrow {\mathbb R},$ $\Phi: \Omega \times H
\times H\rightarrow {\mathbb R}.$

Let us make the following assumptions.

\begin{ass}\label{ass:2.5}
\begin{enumerate}
\item[]
\item[(i)]  ${\mathscr U}$   is a nonempty convex closed subset of a real separable Hilbert space $U$.
\item[(ii)] The operator $A$ is ${\mathscr P} / {\mathscr B} ( {\mathscr L} ( V, V^* ) )$-measurable and satisfies the conditions (iii) and (iv) in Assumption \ref{ass:3.2}.
\item[(iii)]The mappings $h$ and  $g$ are
${\mathscr P} \otimes {\mathscr B} (H) \otimes {\mathscr B} (H) \otimes {\mathscr B} ({\mathscr U}) / {\mathscr B} (H)$-measurable such that $h ( \cdot, 0, 0 ), g ( \cdot, 0, 0, 0 ) \in {\cal
M}^2_{\mathscr F} ( 0, T; H )$. Moreover, for almost all $( t, \omega ) \in [ 0, T ] \times \Omega$,  $h$ and $g$ have  continuous and uniformly bounded G\^ateaux derivatives
$h_x, h_{x'}, g_x, g_{x'}, h_u$ and  $g_u$.
\item[(iv)] The mapppings $l$
is ${\mathscr P} \otimes {\mathscr B} (H) \otimes {\mathscr B} (H) \otimes {\mathscr B} ({\mathscr U})/ {\mathscr B} ({\mathbb R})$-measurable and  $\Phi $
is ${\mathscr F}_T\otimes {\mathscr B} (H)\otimes {\mathscr B} (H) / {\mathscr B} ({\mathbb R})$-measurable. For almost all $( t, \omega ) \in [ 0, T ] \times \Omega$, $l$ has continuous G\^ateaux derivatives $l_x,
l_{x'}$ and $l_u$, $\Phi ( \omega, x )$
has continuous G\^ateaux derivative $\Phi_x$.
Moreover, for almost all $( t, \omega ) \in [ 0, T ] \times \Omega$, there is  a constant $C > 0$
such that
\begin{eqnarray*}
| l ( t, x, x', u,  ) |
\leq  C ( 1 + \| x \|^2_H + \| x'\|^2_H + \| u \|_U^2 ) ,
\end{eqnarray*}
\begin{eqnarray*}
&& \| l_x ( t, x, x', u) \|_H + \| l_{x'} ( t, x, x', u) \|_H
+ \| l_u ( t, x, x', u ) \|_U  \\
&& \qquad\qquad\qquad\qquad\qquad \leq C ( 1 + \| x \|_H + \| x' \|_H + \| u \|_U  ) ,
\end{eqnarray*}
and
\begin{eqnarray*}
& | \Phi (x, x') | \leq C ( 1 + \| x \|^2_H
 + \| x' \|^2_H) , \\
& \| \Phi_x (x, x') \|_H \leq C ( 1 + \| x \|_H++ \| x' \|^2_H ), \forall ( x, x', u ) \in H \times H \times {\mathscr U}.
\end{eqnarray*}
\end{enumerate}
\end{ass}

Now we define
\begin{defn}
A  predictable control process $u (\cdot)$ is said to be admissible if
$u (\cdot) \in {\cal M}^2 ( 0, T; U )$ and $u (t) \in {\mathscr U}$, a.e. $t \in [0, T]$, ${\mathbb P}$-a.s..
 Denote by ${\cal A}$  the set of all admissible control processes.
\end{defn}

Given $u(\cdot),$ equation \eqref{eq:5.1}
is a MF-SPDE with random coefficients.
From Theorem \ref{lem:3.3} and Theorem \ref{thm:3.4}, it is easily
seen that  under  Assumption \ref{ass:2.5}, the state equation \eqref{eq:5.1} admits a unique solution $X (\cdot)\equiv X^u(\cdot) \in {\cal S}^2_{\mathscr F} ( 0, T; H )$ and
the cost functional is well-defined In the case that
$X(\cdot)$ is the solution of  \eqref{eq:3.1}
corresponding to $u(\cdot)\in \cal A,$
We call $( u (\cdot); X (\cdot) )$ an admissible pair, and $X (\cdot) $ an admissible  state process.

Our optimal control problem  can be stated as
follows
\begin{pro}\label{pro:2.1}

Minimizes \eqref{eq:5.2} over $\cal A$
Any ${\bar u} (\cdot) \in {\cal A}$
satisfying
\begin{eqnarray}\label{eq:b8}
J ( {\bar u}(\cdot) ) = \inf_{u (\cdot) \in {\cal A}} J ( u (\cdot) ) .
\end{eqnarray}
\end{pro}
is called an optimal control process of
Problem \ref{pro:2.1}.  The corresponding  state process ${\bar X} (\cdot)$ and the admissible   $( {\bar u} (\cdot); {\bar X} (\cdot) )$
is called an optimal state process
and  optimal pair of
Problem \ref{pro:2.1}, respetively.

For any admissible pair $( u (\cdot); X (\cdot) )$,  the adjoint equation of the
state equation \eqref{eq:5.1} and \eqref{eq:5.4}
is defined  by  the following
BSDE whose  unknown is a pair of
${\mathbb F}$-adapted processes
$(p(\cdot), q(\cdot))$
\begin{eqnarray} \label{eq:4.1}
\left\{
\begin{aligned}
d p(t) = & - \bigg\{ - A^* (t) p (t) +
h^*_x(t, X(t), \mathbb E [X(t)], u(t))p(t)
+\mathbb E [h^*_x(t, X(t), \mathbb E [X(t)], u(t))p(t)]
\\&+g^*_x(t, X(t), \mathbb E [X(t)], u(t))q(t)
+\mathbb E[g^*(t, X(t), \mathbb E [X(t)], u(t))q(t)]
 \\&
 +l_x(t, X(t), \mathbb E[ X(t)], u(t))
+\mathbb E[ l_x(t, X(t), \mathbb E [X(t)], u(t))]\bigg\}d t
 + q (t) d W (t) ,  \quad t \in [0, T] , \\
p (T) = & \ \Phi_{x} ( X (T), \mathbb E [X(T)] )
+\ \mathbb E[\Phi_{x'} ( X (T), \mathbb E [X(T)] ) ] ,
\end{aligned}
\right.
\end{eqnarray}
Indeed, the above equation  is a linear MF-BSPDE
where  $A^*$ is the adjoint operator of $A$.
Further,  we can easily see that $A^*$ also satisfies tha boundedness and coercivity conditions
 In view of Theorem \ref{lem:1.3}, the linear MF-BSPDE \eqref{eq:4.1} have  a unique solution $( p (\cdot), q (\cdot) )
\in {\cal S}^2_{\mathscr F} ( 0, T; V ) \times {\cal M}^2_{\mathscr F} ( 0, T; H )$.

Define the Hamiltonian ${\cal H}: [ 0, T ] \times \Omega \times H \times H \times {\mathscr U} \times {\mathscr U} \times V \times H
\rightarrow {\mathbb R}$  by
\begin{eqnarray}\label{eq4.2}
{\cal H} ( t, x, x', u, p, q ) := \left ( h ( t, x, x', u), p \right )_H
+\left( g ( t, x, x', u), q \right)_H + l ( t, x, x', u ) .
\end{eqnarray}
 Under Assumption \ref{ass:2.5}, we can see that
  the Hamiltonian ${\cal H}$
is also continuously G\^ateaux differentiable in $( x, x', u)$. Denote by  ${\cal H}_x$, ${\cal H}_{x'}$
 and ${\cal H}_u$  the corresponding G\^ateaux derivatives.

Therefore, using the notation of
Hamiltonian ${\cal H},$
the adjoint equation \eqref{eq:4.1} can be
written as

\begin{eqnarray} \label{eq:4.1}
\left\{
\begin{aligned}
d p(t) = & - \{ - A^* (t) p (t) + {\cal H}_{x} (t) + {\mathbb E} [ {\cal H}_{x'}(t)]d t + q (t) d W (t) ,  \quad t \in [0, T] , \\
p (T) = &\ \Phi_{x} ( X (T), \mathbb E [X(T)] )
+\ \mathbb E\Phi_{x'} ( X (T), \mathbb E [X(T)] )
\end{aligned}
\right.
\end{eqnarray}
Here we have used the following
shorthand notation:
\begin{eqnarray}\label{eq:4.3}
{\cal H} (t) \triangleq {\cal H} ( t, X (t),
\mathbb EX ( t), u (t), p (t), q (t) ).
\end{eqnarray}

\subsection{ A Variation Formula for the
Cost Functional}

Suppose that
$(u(\cdot); X(\cdot))$
and $(\bar u(\cdot);\bar X(\cdot))$
are any two given  admissible
control pairs. And let $(  \bar p (\cdot), \bar q (\cdot) )$  the  solution to  the
 corresponding adjoint equation \eqref{eq:4.1}
  associated with the admissible
   control pair $(\bar u(\cdot); \bar X(\cdot))$. In order to simplify our notation in the rest of the paper, we shall use the following shorthand notation
\begin{eqnarray}\label{eq:5.8}
&& \rho (t) \triangleq \rho ( t, X (t),
\mathbb E [X ( t )], u (t) ) , \quad  \rho \triangleq h , g , \nonumber \\
&& {\bar \rho} (t) \triangleq \rho ( t, {\bar X} (t), \mathbb E[{\bar X} ( t )], {\bar u} (t)) ,
\quad \rho \triangleq h , g , \nonumber\\
&& {\cal H} (t) \triangleq {\cal H} ( t, X (t), \mathbb E[X ( t)],  u (t), {\bar p} (t), {\bar q} (t) ) ,
\nonumber \\
&& {\bar {\cal H}} (t) \triangleq {\cal H} ( t, {\bar X} (t), \mathbb E[{\bar X}(t)], {\bar u} (t) ,{\bar p} (t), {\bar q} (t) ) .
\end{eqnarray}

To obtain the variation formula for the
cost functional , we need  the following
basic result.

\begin{lem} \label{lem:4.3}
Let  Assumption \ref{ass:2.5} be satisfied.
Then difference  $J ( u (\cdot) ) - J ( {\bar u} (\cdot) )$ of the cost functionals associated with the two admissible pairs
$(u(\cdot); X(\cdot))$
and $(\bar u(\cdot);\bar X(\cdot))$
 has the following
 representation:
\begin{eqnarray}\label{eq:4.9}
J ( u (\cdot) ) - J ( {\bar u} (\cdot) )
&=& {\mathbb E} \bigg [ \int_0^T \bigg \{ {\cal H} (t) - {\bar {\cal H}} (t)
- ( {\bar {\cal H}}_x (t) +
 \mathbb E [{\bar {\cal H}}_{x'} (t)],
 X(t) - {\bar X} (t) )_H \bigg \} d t \bigg ] \nonumber \\
&& + {\mathbb E} \bigg[ \Phi ( X (T), \mathbb E
[ X(T)])
- \Phi ( {\bar x} (T), \mathbb E [\bar X(T)] )\nonumber
\\&&- \Big( \Phi_x ( {\bar X} (T),\mathbb E
 [\bar X(T)]  )+ \mathbb E[\Phi_{x'} ( {\bar x} (T),\mathbb E
 [\bar X(T)] )], X (T) - {\bar X} (T) \Big)_H \bigg] .
\end{eqnarray}
\end{lem}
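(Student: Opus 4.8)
The plan is to obtain \eqref{eq:4.9} by writing the cost difference directly from its definition \eqref{eq:5.2} and then eliminating the running- and terminal-cost contributions through an application of It\^o's formula to the duality pairing between the adjoint state $\bar p(\cdot)$ and the difference of the two state trajectories. To this end I would first set $\hat X(t) \triangleq X(t) - \bar X(t)$ and note that, since both admissible states start from the same initial datum $x$, one has $\hat X(0)=0$ and, in the sense of $V^*$,
\[
d\hat X(t) = \big\{ -A(t)\hat X(t) + \big(h(t)-\bar h(t)\big)\big\}\,dt
+ \big(g(t)-\bar g(t)\big)\,dW(t),
\]
with the shorthand of \eqref{eq:5.8}. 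Directly from \eqref{eq:5.2} the cost difference splits as
\[
J(u(\cdot)) - J(\bar u(\cdot))
= {\mathbb E}\bigg[\int_0^T \big(l(t)-\bar l(t)\big)\,dt\bigg]
+ {\mathbb E}\big[\Phi(X(T),{\mathbb E}[X(T)]) - \Phi(\bar X(T),{\mathbb E}[\bar X(T)])\big],
\]
so the task reduces to re-expressing the running-cost integral.

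Next I would apply It\^o's formula in the Gelfand triple to $(\bar p(t),\hat X(t))_H$, using the adjoint dynamics \eqref{eq:4.1} for $d\bar p$ and the equation above for $d\hat X$. The two operator terms cancel because $A^*$ is the adjoint of $A$: the pairing $\langle A^*\bar p(t),\hat X(t)\rangle$ coming from the drift of $\bar p$ cancels $\langle A(t)\hat X(t),\bar p(t)\rangle$ coming from the drift of $\hat X$, while the It\^o correction contributes $(\bar q(t),g(t)-\bar g(t))_H$. Integrating over $[0,T]$, taking expectations so that the stochastic integrals vanish (the required integrability following from the regularity $X,\bar X\in {\cal S}^2_{\mathscr F}(0,T;H)\cap{\cal M}^2_{\mathscr F}(0,T;V)$, $\bar p\in{\cal S}^2_{\mathscr F}(0,T;V)$, $\bar q\in{\cal M}^2_{\mathscr F}(0,T;H)$), and using $\hat X(0)=0$ together with the terminal condition $\bar p(T)=\Phi_x(\bar X(T),{\mathbb E}[\bar X(T)])+{\mathbb E}[\Phi_{x'}(\bar X(T),{\mathbb E}[\bar X(T)])]$ yields
\[
{\mathbb E}\big[(\bar p(T),\hat X(T))_H\big]
= {\mathbb E}\bigg[\int_0^T\Big\{-\big(\bar{\mathcal{H}}_x(t)+{\mathbb E}[\bar{\mathcal{H}}_{x'}(t)],\,\hat X(t)\big)_H
+(\bar p(t),h(t)-\bar h(t))_H+(\bar q(t),g(t)-\bar g(t))_H\Big\}\,dt\bigg].
\]

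The final step is to recognize the Hamiltonian. By definition \eqref{eq4.2}, $(\bar p(t),h(t)-\bar h(t))_H+(\bar q(t),g(t)-\bar g(t))_H = \mathcal{H}(t)-\bar{\mathcal{H}}(t)-(l(t)-\bar l(t))$, which lets me solve the displayed identity for ${\mathbb E}[\int_0^T(l-\bar l)\,dt]$ and substitute it into the cost-difference split. Rewriting the leftover term ${\mathbb E}[(\bar p(T),\hat X(T))_H]$ through the terminal condition for $\bar p(T)$ turns it into the terminal inner-product term, and collecting everything produces \eqref{eq:4.9} exactly.

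The main obstacle I anticipate is not the algebra but the rigorous justification of It\^o's formula for $(\bar p(t),\hat X(t))_H$ in the infinite-dimensional Gelfand-triple setting, where $\hat X$ evolves in $V$ with a $V^*$-valued drift while $\bar p$ is the first component of a BSPDE solution; one must check that the product rule and the cancellation of the $A$-terms are legitimate, which is exactly the type of It\^o formula already invoked in the proofs of Theorems \ref{lem:1.2} and \ref{lem:1.4}, with the needed integrability supplied by the a priori estimates established there. A secondary, purely bookkeeping point is that the mean-field terms ${\mathbb E}[\bar{\mathcal{H}}_{x'}]$ and ${\mathbb E}[\Phi_{x'}]$ enter only through the definition of the adjoint equation \eqref{eq:4.1}, so they require no separate Fubini-type manipulation here.
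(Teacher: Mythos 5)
Your proposal is correct and follows essentially the same route as the paper's proof: It\^o's formula applied to $(\bar p(t), X(t)-\bar X(t))_H$ with cancellation of the $A$/$A^*$ terms, followed by rewriting the drift and It\^o-correction terms through the definition of the Hamiltonian and the terminal condition of the adjoint equation. The only difference is bookkeeping order (you substitute the duality identity into the running-cost integral, while the paper first rewrites the cost difference in Hamiltonian form and then inserts the duality identity), which is the same computation.
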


\begin{proof}

Suppose that
$(u(\cdot); X(\cdot))$
and $(\bar u(\cdot);\bar X(\cdot))$
are any two given  admissible
control pairs. By the state equation \eqref{eq:3.1}, it is easy to check that
the difference
$X(\cdot)-\bar X(\cdot)$ satisfies  the following MF-SPDE:
\begin{eqnarray}\label{eq:5.10}
\left\{
\begin{aligned}
d (X (t)-\bar X(t)) =& \ [ - A (t) (X(t)
 -\bar X(t))+ h ( s )-\bar h ( s ) ] d t
 +[ g ( s )-\bar g(s) )] d W (t) , \quad t \in [ 0, T ] , \\
X(0)-\bar X(0) =& \ 0
\end{aligned}
\right.
\end{eqnarray}
And by the definition of  the adjoint
  equation (see \eqref{eq:4.1}),
   we can  get that $(\bar p(\cdot), \bar q(\cdot))$ satisfies
 the following MF-BSPDE
\begin{eqnarray} \label{eq:5.10}
\left\{
\begin{aligned}
d \bar p(t) = & - \{ - A^* (t) \bar p (t) + \bar {\cal H}_{x} (t) + {\mathbb E} [\bar  {\cal H}_{x'}]d t + \bar q (t) d W (t) ,  \quad t \in [0, T] , \\
p (T) = & \ \Phi_{x} ( \bar X  (T),
\mathbb E [\bar X(T)] )+\mathbb E[\Phi_{x'} ( \bar X(T),\mathbb E [ X(T)] )] ,
\end{aligned}
\right.
\end{eqnarray}
Then by using It\^o's formula to $( {\bar p} (t), X (t) - {\bar X} (t) )_H$, we  get that
\begin{eqnarray}\label{eq:4.12}
&&{\mathbb E}\bigg [ \int_0^T  \bigg \{ ( {\bar p} (t), h (t) - {\bar h} (t) )_H
+ ( {\bar q} (t), g (t) - {\bar g} (t) )_H \bigg \} d t \bigg ] \nonumber \\
~~~~~~~~~&=& {\mathbb E} \bigg [ \int_0^T ( {\bar {\cal H}}_x (t) + {\mathbb E} [ {\bar {\cal H}}_{x'}(t)]
, X (t) - {\bar X} (t) )_H d t \bigg ]
\nonumber
\\&&+ {\mathbb E} \Big[ (\Phi_{x} ( \bar X  (T),
\mathbb E [\bar X(T)] )+\mathbb E\Phi_{x'} ( \bar X(T),\mathbb E [\bar X(T)] ), X (T) - {\bar X} (T) )_H \Big] .
\end{eqnarray}

 In view of the cost functional  and the definitions of the Hamiltonian  ${\cal H}$ and (see \eqref{eq4.2} and \eqref{eq:5.2}),
we can see that
\begin{eqnarray}\label{eq:4.10}
J ( u (\cdot) ) - J ( {\bar u} (\cdot) )
&=& {\mathbb E} \bigg [ \int_0^T \bigg \{ {\cal H} (t) - {\bar {\cal H}} (t)
- ( {\bar p} (t), h (t) - {\bar h} (t) )_H \nonumber - ( {\bar q} (t), g (t) - {\bar g} (t) )_H \bigg \} d t \bigg ]
 \\&&+ {\mathbb E} \bigg[ \Phi ( X (T), \mathbb E[X(T)])
- \Phi ( {\bar X} (T), \mathbb E [\bar X(T)] )\bigg] .
\end{eqnarray}
Then \eqref{eq:4.9} can be immediately obtained by
  substituting \eqref{eq:4.12} into \eqref{eq:4.10}. The proof is complete.
\end{proof}

Next we derive a variational formula for the cost functional \eqref{eq:5.2}.

\begin{lem}\label{lem:3.1}
 Let Assumption \ref{ass:2.5} be satisfied. Then
 we have the following variational formula
\begin{eqnarray}\label{eq:4.4}
\frac{d}{d\epsilon} J ( {\bar u} (\cdot) + \epsilon ( v (\cdot) - {\bar u} (\cdot) ) ) |_{\epsilon = 0}
&=& \lim_{\epsilon \rightarrow 0^+} \frac{J ( {\bar u} (\cdot) + \epsilon ( v (\cdot) - {\bar u} (\cdot) ) )
-J ( {\bar u} (\cdot) )}{\epsilon} \nonumber \\
&=& {\mathbb E} \bigg [ \int_0^T ( {\bar {\cal H}}_u (t), v (t) - {\bar u} (t) )_U d t \bigg ] .
\end{eqnarray}
where $\bar u(\cdot)$ and $v(\cdot)$
are  any  two given admissible controls, and $0\leq\eps \leq 1$ .
\end{lem}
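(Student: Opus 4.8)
The plan is to use the classical convex (Gateaux) perturbation argument together with the cost-difference representation already established in Lemma \ref{lem:4.3}. Fix the two admissible controls $\bar u(\cdot)$ and $v(\cdot)$, and for $\epsilon \in [0,1]$ set $u^\epsilon(\cdot) = \bar u(\cdot) + \epsilon(v(\cdot) - \bar u(\cdot))$; since $\mathscr U$ is convex and closed by Assumption \ref{ass:2.5}(i), each $u^\epsilon(\cdot)$ is again admissible. Let $X^\epsilon(\cdot)$ denote the state governed by \eqref{eq:5.1} under $u^\epsilon(\cdot)$. The first step is the order estimate $\mathbb E[\sup_{0\leq t\leq T}\|X^\epsilon(t) - \bar X(t)\|_H^2] + \mathbb E[\int_0^T \|X^\epsilon(t) - \bar X(t)\|_V^2\, dt] = O(\epsilon^2)$. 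This follows directly from the continuous dependence theorem (Theorem \ref{thm:3.4}, estimate \eqref{eq:c2}): $X^\epsilon$ and $\bar X$ solve \eqref{eq:5.1} with the same operator $A$ but controls $u^\epsilon$ and $\bar u$, so the right-hand side of \eqref{eq:c2} reduces to $\|h(t, \mathbb E\bar X, \bar X, u^\epsilon) - h(t, \mathbb E\bar X, \bar X, \bar u)\|_H^2$ together with the analogous term for $g$; since $h_u, g_u$ are uniformly bounded by Assumption \ref{ass:2.5}(iii), each of these is dominated by $C\epsilon^2\|v(t) - \bar u(t)\|_U^2$, giving the $O(\epsilon^2)$ bound.

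Second, I would apply Lemma \ref{lem:4.3} with $u(\cdot) = u^\epsilon(\cdot)$ to represent $J(u^\epsilon) - J(\bar u)$ as the $\mathbb E\int_0^T(\cdot)\,dt$ of $\mathcal H(t, X^\epsilon, \mathbb E X^\epsilon, u^\epsilon, \bar p, \bar q) - \bar{\mathcal H}(t) - (\bar{\mathcal H}_x(t) + \mathbb E[\bar{\mathcal H}_{x'}(t)], X^\epsilon(t) - \bar X(t))_H$ plus the corresponding second-order terminal expression in $\Phi$. Because $\mathcal H$ is continuously Gateaux differentiable in $(x,x',u)$, a first-order expansion gives $\mathcal H(t, X^\epsilon, \mathbb E X^\epsilon, u^\epsilon, \bar p, \bar q) - \bar{\mathcal H}(t) = (\bar{\mathcal H}_x(t), X^\epsilon - \bar X)_H + (\bar{\mathcal H}_{x'}(t), \mathbb E X^\epsilon - \mathbb E\bar X)_H + (\bar{\mathcal H}_u(t), u^\epsilon - \bar u)_U + R^\epsilon(t)$. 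The key simplification is that the mean-field first-order term can be rewritten by the tower property: since $\mathbb E X^\epsilon(t) - \mathbb E\bar X(t)$ is deterministic, $\mathbb E[(\bar{\mathcal H}_{x'}(t), \mathbb E X^\epsilon - \mathbb E\bar X)_H] = (\mathbb E[\bar{\mathcal H}_{x'}(t)], \mathbb E[X^\epsilon - \bar X])_H = \mathbb E[(\mathbb E[\bar{\mathcal H}_{x'}(t)], X^\epsilon - \bar X)_H]$, so after taking expectation the $(\bar{\mathcal H}_x, X^\epsilon - \bar X)_H$ and $x'$-terms cancel exactly against the subtracted adjoint term $(\bar{\mathcal H}_x + \mathbb E[\bar{\mathcal H}_{x'}], X^\epsilon - \bar X)_H$. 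An identical cancellation, using $\Phi_x$ and $\mathbb E[\Phi_{x'}]$, disposes of the terminal $\Phi$-expression at first order. What survives is precisely $(\bar{\mathcal H}_u(t), u^\epsilon(t) - \bar u(t))_U = \epsilon(\bar{\mathcal H}_u(t), v(t) - \bar u(t))_U$.

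Dividing by $\epsilon$ and letting $\epsilon \to 0^+$ then gives the asserted formula, provided the remainder contributes $o(\epsilon)$, which is the main technical point. Writing $R^\epsilon$ in mean-value form, $R^\epsilon(t) = \int_0^1 \big[(\bar{\mathcal H}_x^\lambda(t) - \bar{\mathcal H}_x(t), X^\epsilon - \bar X)_H + (\bar{\mathcal H}_{x'}^\lambda(t) - \bar{\mathcal H}_{x'}(t), \mathbb E X^\epsilon - \mathbb E\bar X)_H + (\bar{\mathcal H}_u^\lambda(t) - \bar{\mathcal H}_u(t), u^\epsilon - \bar u)_U\big]\, d\lambda$, where the superscript $\lambda$ denotes evaluation at the interpolated arguments $(\bar X + \lambda(X^\epsilon - \bar X), \mathbb E\bar X + \lambda(\mathbb E X^\epsilon - \mathbb E\bar X), \bar u + \lambda(u^\epsilon - \bar u))$, I would estimate $\mathbb E\int_0^T |R^\epsilon(t)|/\epsilon\, dt$ by Cauchy--Schwarz. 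The factors $\|X^\epsilon - \bar X\|_H/\epsilon$ and $\|u^\epsilon - \bar u\|_U/\epsilon = \|v - \bar u\|_U$ stay bounded in $L^2(\Omega\times[0,T])$ by the Step-1 estimate, while the factors $\|\bar{\mathcal H}_x^\lambda - \bar{\mathcal H}_x\|$, etc., tend to $0$ in $L^2(\Omega\times[0,T])$ by the continuity of the Gateaux derivatives of $h, g, l$ (Assumption \ref{ass:2.5}), the convergences $X^\epsilon \to \bar X$ and $u^\epsilon \to \bar u$, and dominated convergence justified by the affine growth bounds on those derivatives. Hence $\mathbb E\int_0^T |R^\epsilon(t)|\, dt = o(\epsilon)$, and the terminal remainder is likewise $o(\epsilon)$ using the bounds on $\Phi$ and the $O(\epsilon^2)$ estimate on $X^\epsilon(T) - \bar X(T)$. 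The hard part is exactly this uniform control of the remainder, namely verifying that the nonlinear contributions are genuinely higher order, which rests on combining the $O(\epsilon^2)$ state estimate with the continuity and growth hypotheses on the derivatives.
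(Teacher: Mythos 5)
Your proposal is correct and follows essentially the same route as the paper's own proof: the convex perturbation $u^\epsilon = \bar u + \epsilon(v-\bar u)$, the cost-difference representation of Lemma \ref{lem:4.3}, the $O(\epsilon^2)$ state estimate from the continuous dependence theorem (Theorem \ref{thm:3.4}), the mean-value/Taylor expansion of $\mathcal{H}$ with the tower-property rewriting of the $x'$-term, and the Cauchy--Schwarz plus dominated-convergence argument showing the remainder (including the terminal $\Phi$-term) is $o(\epsilon)$. The only cosmetic difference is that you isolate an explicit remainder $R^\epsilon$ around the base point while the paper keeps the derivatives at the $\lambda$-interpolated arguments throughout; these are the same computation.
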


\begin{proof}

Suppose that $( {\bar u} ( \cdot ); {\bar X} ( \cdot) )$
is a given admissible pair and $( {\bar p} ( \cdot ), {\bar q} ( \cdot ) )$ is the corresponding adjoint process.
Define  a perturbed
control process of $\bar u(\cdot)$ as follows:
\begin{eqnarray} \label{eq:5.15}
u^\epsilon ( \cdot ) \triangleq {\bar u} ( \cdot ) + \epsilon ( v ( \cdot ) - {\bar u} ( \cdot) ) ,
\quad 0 \leq \epsilon \leq 1 ,
\end{eqnarray}
where $v(\cdot)$ is any given
admissible control.
  Due to the convexity of the control domain ${\mathscr U}$,  $u^\eps(\cdot)$ belongs to ${\cal A}$. Let  $X^\epsilon ( \cdot )$ be  the state process corresponding to the control $u^\eps(\cdot)$.  We will use the following
shorthand notation:
\begin{eqnarray} \label{eq:5.16}
{\cal H}^\epsilon (t) \triangleq {\cal H} ( t, X^\epsilon (t), \mathbb EX^\epsilon ( t), u^\epsilon (t),
{\bar p} (t), {\bar q} (t) ) .
\end{eqnarray}
Using  the shorthand notations
\eqref{eq:5.8} and  \eqref{eq:5.16},
 from Lemma \ref{lem:4.3}, we get that

\begin{eqnarray}\label{eq:5.17}
J ( u^{\eps} (\cdot) ) - J ( {\bar u} (\cdot) ) \nonumber
&=& {\mathbb E} \bigg [ \int_0^T \bigg \{ {\cal H}^\eps (t) - {\bar {\cal H}} (t)
- ( {\bar {\cal H}}_x (t) +
 \mathbb E [{\bar {\cal H}}_{x'} (t)],
X (t) - {\bar X} (t) )_H
- ( {\bar {\cal H}}_u (t),
u^\epsilon (t) - {\bar u} (t) )_U \bigg \} d t \bigg]\nonumber
\\&&+ {\mathbb E} \bigg[ \Phi ( X^\eps (T), \mathbb E[X^\eps(T))]
- \Phi ( {\bar X} (T), \mathbb E [\bar X(T)] ) \nonumber
\\&&- ( \Phi_x ( {\bar X} (T),\mathbb E
 [\bar X(T)]  )+ \mathbb E[\Phi_{x'} ( {\bar X} (T),\mathbb E
 [\bar X(T)] )], X^\eps (T) - {\bar X} (T) )_H \bigg] .
 \nonumber \\
&& + {\mathbb E} \bigg [ \int_0^T ( {\bar {\cal H}}_u (t),  u^\epsilon (t) - {\bar u} (t) )_U d t \bigg ] .
\end{eqnarray}
In view of  the Taylor series expansion,
it follows that
\begin{eqnarray}\label{eq:4.121}
{\mathbb E} \bigg [ \int_0^T \{ {\cal H}^\epsilon (t) - {\bar {\cal H}} (t) \} d t \bigg ]
&=& {\mathbb E} \bigg [ \int_0^T \int_0^1 \bigg \{ ( {\cal H}_x^{\epsilon, \lambda} (t), X^\epsilon (t) - {\bar X} (t) )_H
+ ( {\cal H}_{x'}^{\epsilon, \lambda} (t),
 \mathbb E [X^\epsilon ( t)] -\mathbb E [ {\bar X} ( t)] )_H \nonumber \\
&& + ( {\cal H}_u^{\epsilon, \lambda} (t), u^\epsilon (t) - {\bar u} (t) )_U \bigg \}
d \lambda d t \bigg ] \nonumber \\
&=& {\mathbb E} \bigg [ \int_0^T \int_0^1 \bigg \{ ( {\cal H}_x^{\epsilon, \lambda} (t)
+
\mathbb E[{\cal H}_{x'}^{\epsilon, \lambda} (t)],
X^\epsilon (t) - {\bar X} (t) )_H \nonumber \\
&& + ( {\cal H}_u^{\epsilon, \lambda} (t),
u^\epsilon (t) - {\bar u} (t) )_U \bigg \} d \lambda d t \bigg ] ,
\end{eqnarray}
where
\begin{eqnarray*}
{\cal H}^{\epsilon, \lambda} (t) \triangleq {\cal H} ( t, X^{\epsilon, \lambda} (t),
\mathbb E [X^{\epsilon, \lambda} ( t)],
u^{\epsilon, \lambda} (t), {\bar p} (t), {\bar q} (t) ) ,
\end{eqnarray*}
and
\begin{eqnarray*}
X^{\epsilon, \lambda} (t) \triangleq {\bar X} (t) + \lambda ( X^\epsilon (t) - {\bar X} (t) ) , \\
u^{\epsilon, \lambda} (t) \triangleq {\bar u} (t) + \lambda ( u^\epsilon (t) - {\bar u} (t) ) .
\end{eqnarray*}
On the hand,
it follows from  the definition of $u^{\eps}$ (see \eqref{eq:5.15})
that
\begin{eqnarray} \label{eq:5.20}
  \mathbb E
  \bigg[\int_0^T||u^\eps(t)-\bar u(t)||^2_Udt\bigg]=\eps^2
  \mathbb E\bigg[\int_0^T||v(t)-\bar u(t)||^2_Udt\bigg]
\end{eqnarray}
Further, in view of  the continuous dependence theorem of MF-SPDE (see Theorem
\ref{thm:3.4} ), we have
\begin{eqnarray} \label{eq:5.21}
&&{\mathbb E} \bigg [ \sup_{0 \leq t \leq T} \| X^\epsilon (t) - {\bar X} (t) \|^2_H \bigg ]
+ {\mathbb E} \bigg [ \int_0^T \| X^\epsilon (t) - {\bar X} (t) \|^2_V d t \bigg ]\nonumber
 \\&& \leq
 K  \mathbb E\bigg[\int_0^T||u^\eps(t)-\bar u(t)||^2_Udt\bigg]\nonumber
 \\&& =K \eps^2
 \mathbb E \bigg[ \int_0^T||v(t)-\bar u(t)||^2_Udt
 \bigg] \ .
\end{eqnarray}
 Therefore, combining \eqref{eq:4.121},
 \eqref{eq:5.20} and  \eqref{eq:5.21}
  yields
\begin{eqnarray}\label{eq:4.14}
&& {\mathbb E} \bigg [ \int_0^T \bigg \{ {\cal H}^\epsilon (t) - {\bar {\cal H}} (t) - ( {\bar {\cal H}}_x (t) +
\mathbb E{\bar {\cal H}}_{x'}(t), x^\epsilon (t) - {\bar x} (t) )_H
- ( {\bar {\cal H}}_u (t),
u^\epsilon (t) - {\bar u} (t) )_U \bigg \} d t \bigg ] \nonumber \\
&=& {\mathbb E} \bigg [ \int_0^T \int_0^1 \bigg \{ ( {\cal H}_x^{\epsilon, \lambda} (t)+
 \mathbb E[{\cal H}_{x'}^{\epsilon, \lambda} (t)]-{\bar {\cal H}}_x (t) -
\mathbb E[{\bar {\cal H}}_{x'}(t)], X^\epsilon (t) - {\bar X} (t) )_H\nonumber \\
&& + ( {\cal H}_u^{\epsilon, \lambda} (t)-{\bar {\cal H}}_u (t), u^\epsilon (t) - {\bar u} (t) )_U \bigg \}
d \lambda d t \bigg ] \nonumber \\
&\leq& \bigg\{{\mathbb E} \bigg [ \int_0^T \int_0^1
 || ( {\cal H}_x^{\epsilon, \lambda} (t)+
 \mathbb E[{\cal H}_{x'}^{\epsilon, \lambda} (t)]-{\bar {\cal H}}_x (t) -\mathbb E[{\bar {\cal H}}_{x'}(t)]||_H^2 dtd\lambda \bigg]\bigg\}^{\frac{1}{2}}
\bigg\{{\mathbb E} \bigg [ \int_0^T || X^\epsilon (t) - {\bar X} (t) ||_H^2 \bigg]\bigg\}^{\frac{1}{2}}\nonumber \\
&& +\bigg\{{\mathbb E} \bigg [ \int_0^T \int_0^1
 || ( {\cal H}_u^{\epsilon, \lambda} (t)-{\bar {\cal H}}_u (t)||_H^2 dtd\lambda \bigg]\bigg\}^{\frac{1}{2}}
\bigg\{{\mathbb E} \bigg [ \int_0^T || u^\epsilon (t) - {\bar u} (t) ||_H^2 \bigg]\bigg\}^{\frac{1}{2}}\\ \nonumber
&\leq&  K \eps \bigg\{{\mathbb E} \bigg [ \int_0^T \int_0^1
 || ( {\cal H}_x^{\epsilon, \lambda} (t)+
 \mathbb E[{\cal H}_{x'}^{\epsilon, \lambda} (t)]-{\bar {\cal H}}_x (t) -\mathbb E[{\bar {\cal H}}_{x'}(t)]||_H^2 dtd\lambda \bigg]\bigg\}^{\frac{1}{2}}\nonumber \\
&& + K\eps\bigg\{{\mathbb E} \bigg [ \int_0^T \int_0^1
 || ( {\cal H}_u^{\epsilon, \lambda} (t)-{\bar {\cal H}}_u (t)||_H^2 dtd\lambda \nonumber \bigg]\bigg\}^{\frac{1}{2}}\\ \nonumber
 &=& o(\eps)
\end{eqnarray}
where the last equality
can be obtained by
the fact  that
\begin{eqnarray}
 &&\lim_{\eps\longrightarrow 0} \bigg\{{\mathbb E} \bigg [ \int_0^T \int_0^1
 || ( {\cal H}_x^{\epsilon, \lambda} (t)+
 \mathbb E[{\cal H}_{x'}^{\epsilon, \lambda} (t)]-{\bar {\cal H}}_x (t) -\mathbb E[{\bar {\cal H}}_{x'}(t)]||_H^2 dtd\lambda \bigg]\bigg\}^{\frac{1}{2}}\nonumber
 \\&&~~~~+ \lim_{\eps\longrightarrow 0}\bigg\{{\mathbb E} \bigg [ \int_0^T \int_0^1
 || ( {\cal H}_u^{\epsilon, \lambda} (t)-{\bar {\cal H}}_u (t)||_H^2 dtd\lambda \bigg]\bigg\}^{\frac{1}{2}}=0.
\end{eqnarray}
which can be got
by combining  Assumption \ref{ass:2.5}, \eqref{eq:5.20},
 \eqref{eq:5.21} and
 the dominated
convergence theorem.

We can similarly get that
\begin{eqnarray}\label{eq:4.16}
&&{\mathbb E} \bigg[ \Phi ( X^\eps (T), \mathbb E[X^\eps(T)])
- \Phi ( {\bar X} (T), \mathbb E [\bar X(T)] )\nonumber\\&&~~~~- \Big( \Phi_x ( {\bar X} (T),\mathbb E
 [\bar X(T)]  )+ \mathbb E[\Phi_{x'} ( {\bar X} (T),\mathbb E
 [\bar X(T)] )], X^\eps (T) - {\bar X} (T) \Big)_H \bigg] = o(\epsilon) .
\end{eqnarray}
Hence, by substituting  \eqref{eq:4.14}
and \eqref{eq:4.16} into \eqref{eq:5.17}, we get that
\begin{eqnarray*}
\lim_{\epsilon \rightarrow 0} \frac{J ( u^\epsilon (\cdot) ) - J ( {\bar u} (\cdot) )}{\epsilon}
= {\mathbb E} \bigg [ \int_0^T ( {\bar {\cal H}}_u (t), v (t) - {\bar u} (t) )_U d t \bigg ] .
\end{eqnarray*}
The proof is complete.
\end{proof}

\subsection{ Stochastic  Maximum Principle}

In this subsection, we will establish  the necessary and sufficient maximum principle for the optimal control of Problem \ref{pro:2.1}.

\begin{thm}[{\bf Necessary stochastic maximum principle}]
\label{thm:4.2} Let Assumption \ref{ass:2.5} be satisfied.
Let $( {\bar u} (\cdot); {\bar x} (\cdot) )$ be an optimal pair of
Problem \ref{pro:2.1} associated with the adjoint process $( {\bar p} (\cdot), {\bar
q} (\cdot) )$. Then the following minimum condition holds:
\begin{eqnarray}\label{eq4.15}
( {\bar {\cal H}}_u (t) , v - {\bar u} (t) )_U \geq 0,
\end{eqnarray}
$\forall v \in {\mathscr U}$, for a.e. $t \in [ 0, T ]$, ${\mathbb P}$-a.s..
\end{thm}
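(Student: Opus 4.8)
The plan is to combine the variational formula of Lemma \ref{lem:3.1} with the optimality of $( {\bar u} (\cdot); {\bar X} (\cdot) )$, and then to pass from an integrated inequality to the pointwise minimum condition \eqref{eq4.15} by a localization argument.

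First I would fix an arbitrary admissible control $v (\cdot) \in {\cal A}$ and form the convex perturbation $u^\epsilon (\cdot) = {\bar u} (\cdot) + \epsilon ( v (\cdot) - {\bar u} (\cdot) )$ for $\epsilon \in [0, 1]$. By the convexity of ${\mathscr U}$ (Assumption \ref{ass:2.5}(i)), each $u^\epsilon (\cdot)$ is again admissible, so that $u^\epsilon (\cdot) \in {\cal A}$. Since ${\bar u} (\cdot)$ is optimal, $J ( u^\epsilon (\cdot) ) \geq J ( {\bar u} (\cdot) )$ for every $\epsilon \in [0, 1]$; hence the one-sided difference quotient is nonnegative, and letting $\epsilon \to 0^+$ and invoking Lemma \ref{lem:3.1} gives
$$ {\mathbb E} \bigg [ \int_0^T ( {\bar {\cal H}}_u (t), v (t) - {\bar u} (t) )_U \, d t \bigg ] = \frac{d}{d\epsilon} J ( {\bar u} (\cdot) + \epsilon ( v (\cdot) - {\bar u} (\cdot) ) ) \big|_{\epsilon = 0} \geq 0 . $$

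Next I would localize. Since $v (\cdot)$ above is an \emph{arbitrary} admissible control, for any fixed deterministic $w \in {\mathscr U}$ and any predictable set $A \in {\mathscr P}$ the process $v (\cdot) := w \, \mathbf{1}_A + {\bar u} (\cdot) \, \mathbf{1}_{A^c}$ is admissible, being predictable and ${\mathscr U}$-valued. Substituting it into the displayed inequality yields ${\mathbb E} [ \int_0^T ( {\bar {\cal H}}_u (t), w - {\bar u} (t) )_U \, \mathbf{1}_A \, d t ] \geq 0$. As $A$ ranges over all of ${\mathscr P}$, a standard argument forces $( {\bar {\cal H}}_u (t), w - {\bar u} (t) )_U \geq 0$ for $d t \otimes d {\mathbb P}$-a.e. $(t, \omega)$, with an exceptional null set that a priori depends on $w$.

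Finally, to produce a single null set valid simultaneously for all $v \in {\mathscr U}$, I would use the separability of $U$: choose a countable dense subset $\{ w_k \} \subset {\mathscr U}$, apply the previous step to each $w_k$, and discard the (still null) union of the exceptional sets. Off this union the affine map $v \mapsto ( {\bar {\cal H}}_u (t), v - {\bar u} (t) )_U$ is continuous, so the inequality extends from $\{ w_k \}$ to every $v \in {\mathscr U}$ by density, which is precisely \eqref{eq4.15}. The main obstacle is exactly this passage from the integral inequality to the pointwise condition with a $v$-independent null set; it is here that the separability of $U$ and the continuity of the integrand in the control variable are essential, whereas the one-sided-derivative/optimality step is routine once Lemma \ref{lem:3.1} is available.
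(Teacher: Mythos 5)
Your proof is correct and takes essentially the same route as the paper: the paper likewise forms the convex perturbation, combines Lemma \ref{lem:3.1} with optimality of $\bar u(\cdot)$ to get the integrated inequality ${\mathbb E}[\int_0^T ({\bar{\cal H}}_u(t), v(t)-\bar u(t))_U\,dt]\geq 0$, and then invokes ``the classic argument following Bensoussan (1982)'' for the passage to the pointwise condition \eqref{eq4.15}. Your localization step --- perturbing by $w\,\mathbf{1}_A+\bar u(\cdot)\,\mathbf{1}_{A^c}$ over predictable sets $A$, then removing the $w$-dependence of the null set via a countable dense subset of ${\mathscr U}$ and continuity in $v$ --- is precisely that classic argument written out in full, so you have supplied the detail the paper delegates to a citation.
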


\begin{proof}
For any admissible control $v (\cdot) \in {\cal A}$, it follows from Lemma \ref{lem:3.1}
that
\begin{eqnarray}
&& {\mathbb E} \bigg [ \int_0^T ( {\bar {\cal H}}_u (t) , v (t) - {\bar u} (t) )_U d t \bigg ]
 \nonumber \\
&& = \lim_{\epsilon \rightarrow 0} \frac{J ( u^\epsilon (\cdot) ) - J ( {\bar u} (\cdot) )}{\epsilon} \geq 0 ,
\end{eqnarray}
where the last inequality can
be get directly since $( {\bar u} (\cdot); {\bar X} (\cdot) )$ is an optimal pair of Problem \ref{pro:2.1} Then  minimum condition
\eqref{eq4.15} can be obtained by the
 classic argument following Bensoussan (1982). For similar proofs, we refer to Meng and Shen (2015). The proof is complete.
\end{proof}

Next we will  give the verification theorem of optimality, namely, the sufficient maximum principle
for the optimal control of Problem \ref{pro:2.1}. Besides Assumption \ref{ass:2.5}, the
verification theorem relies on some convexity assumptions of the Hamiltonian and the terminal cost.

\begin{thm}[{\bf Sufficient maximum principle}]\label{thm:4.1}
Let Assumption \ref{ass:2.5} be satisfied. Let $( {\bar u} (\cdot);  {\bar X} (\cdot) )$ be an
admissible pair associated with the adjoint
 process $( {\bar p} (\cdot), {\bar q} (\cdot) ).$
 Suppose that for almost all $( t, \omega ) \in [ 0, T ] \times \Omega,$
\begin{enumerate}
\item ${\cal H} ( t, x, x', u, {\bar p} (t), {\bar q} (t) )$ is convex in $( x, x', u )$,
\item $\Phi (x,x')$ is convex in $(x,x')$,
\item $
{\bar {\cal H}} (t) = \min_{u \in {\mathscr U} } {\cal H} ( t, {\bar X} (t),  \mathbb E [{\bar X} ( t)],
u,  {\bar p} (t), {\bar q} (t) ) ,
$
\end{enumerate} then $( {\bar u} (\cdot); {\bar X} (\cdot) )$
is an optimal pair of Problem \ref{pro:2.1}.
\end{thm}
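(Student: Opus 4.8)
The plan is to derive the inequality $J(u(\cdot))-J({\bar u}(\cdot))\ge 0$ for every admissible $u(\cdot)\in{\cal A}$ directly from the exact difference formula of Lemma~\ref{lem:4.3}, exploiting the two convexity hypotheses to bound each of its two pieces from below and the minimum condition to dispose of the remaining control term. First I would fix an arbitrary $u(\cdot)\in{\cal A}$ with state $X(\cdot)$ and invoke \eqref{eq:4.9}, which expresses $J(u(\cdot))-J({\bar u}(\cdot))$ as the sum of a running integral involving ${\cal H}(t)-{\bar{\cal H}}(t)-({\bar{\cal H}}_x(t)+{\mathbb E}[{\bar{\cal H}}_{x'}(t)],X(t)-{\bar X}(t))_H$ and a terminal expression involving $\Phi$ and its gradient at $({\bar X}(T),{\mathbb E}[{\bar X}(T)])$. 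It then suffices to show that both pieces are nonnegative.

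For the terminal piece, I would use the convexity of $\Phi$ in $(x,x')$ (hypothesis~2) to write the subgradient inequality
\begin{eqnarray*}
\Phi(X(T),{\mathbb E}[X(T)])-\Phi({\bar X}(T),{\mathbb E}[{\bar X}(T)])
&\ge& (\Phi_x,X(T)-{\bar X}(T))_H
+(\Phi_{x'},{\mathbb E}[X(T)]-{\mathbb E}[{\bar X}(T)])_H,
\end{eqnarray*}
with $\Phi_x,\Phi_{x'}$ evaluated at $({\bar X}(T),{\mathbb E}[{\bar X}(T)])$. Taking expectations and using the elementary mean-field identity ${\mathbb E}[(\eta,{\mathbb E}[\zeta])_H]={\mathbb E}[({\mathbb E}[\eta],\zeta)_H]$ (valid because ${\mathbb E}[\zeta]$ and ${\mathbb E}[\eta]$ are deterministic elements of $H$) recombines the $x'$-term with the $x$-term into exactly the combined gradient $\Phi_x+{\mathbb E}[\Phi_{x'}]$ appearing in \eqref{eq:4.9}, so the whole terminal expectation is $\ge 0$.

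For the running piece I would argue the same way: hypothesis~1, the convexity of ${\cal H}(t,\cdot,\cdot,\cdot,{\bar p}(t),{\bar q}(t))$ in $(x,x',u)$, yields
\begin{eqnarray*}
{\cal H}(t)-{\bar{\cal H}}(t)
&\ge& ({\bar{\cal H}}_x(t),X(t)-{\bar X}(t))_H
+({\bar{\cal H}}_{x'}(t),{\mathbb E}[X(t)]-{\mathbb E}[{\bar X}(t)])_H
+({\bar{\cal H}}_u(t),u(t)-{\bar u}(t))_U,
\end{eqnarray*}
and, after applying $\int_0^T$, then ${\mathbb E}$, and the same mean-field identity, the first two terms on the right cancel the subtracted gradient term in \eqref{eq:4.9}, leaving the lower bound ${\mathbb E}\int_0^T({\bar{\cal H}}_u(t),u(t)-{\bar u}(t))_U\,dt$. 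Finally I would convert the minimum condition (hypothesis~3) into a variational inequality: since $\mathscr U$ is convex and ${\bar u}(t)$ minimizes $u\mapsto{\cal H}(t,{\bar X}(t),{\mathbb E}[{\bar X}(t)],u,{\bar p}(t),{\bar q}(t))$ over $\mathscr U$, differentiating along the feasible segment ${\bar u}(t)+\theta(v-{\bar u}(t))$ at $\theta=0^+$ gives $({\bar{\cal H}}_u(t),v-{\bar u}(t))_U\ge 0$ for all $v\in\mathscr U$, a.e. $t$, ${\mathbb P}$-a.s.; taking $v=u(t)$ shows the remaining term is nonnegative. Combining the two bounds gives $J(u(\cdot))\ge J({\bar u}(\cdot))$, i.e. optimality of $({\bar u}(\cdot);{\bar X}(\cdot))$.

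The step I expect to require the most care is the bookkeeping of the mean-field cross terms --- verifying that the $x'$-derivative contributions ${\mathbb E}[{\bar{\cal H}}_{x'}]$ and ${\mathbb E}[\Phi_{x'}]$ produced by the convexity inequalities match, after the identity ${\mathbb E}[(\eta,{\mathbb E}[\zeta])_H]={\mathbb E}[({\mathbb E}[\eta],\zeta)_H]$, precisely the combined gradients already built into the adjoint equation \eqref{eq:4.1} and into the representation \eqref{eq:4.9}. The convexity and minimum-condition steps themselves are routine once this alignment is in place.
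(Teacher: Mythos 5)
Your proposal is correct and follows essentially the same route as the paper's proof: both start from the difference formula of Lemma \ref{lem:4.3}, apply the subgradient inequalities for ${\cal H}$ and $\Phi$ (the paper cites Proposition 1.54 of Ekeland and T\'emam where you derive them directly), and convert the minimum condition into the variational inequality $({\bar{\cal H}}_u(t),v-{\bar u}(t))_U\ge 0$ (the paper cites Proposition 2.21 of Ekeland and T\'emam where you differentiate along the feasible segment). Your explicit treatment of the mean-field identity ${\mathbb E}[(\eta,{\mathbb E}[\zeta])_H]={\mathbb E}[({\mathbb E}[\eta],\zeta)_H]$ is a detail the paper leaves implicit when substituting the convexity inequalities into the difference formula, but it is the same argument.
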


\begin{proof}
 Given an arbitrary admissible pair.$( u (\cdot); X (\cdot) ). $  By Lemma \ref{lem:4.3}, we get
\begin{eqnarray}\label{eq:5.22}
J ( u (\cdot) ) - J ( {\bar u} (\cdot) )
&=& {\mathbb E} \bigg [ \int_0^T \bigg \{ {\cal H} (t) - {\bar {\cal H}} (t)
- ( {\bar {\cal H}}_x (t) +
 \mathbb E [{\bar {\cal H}}_{x'} (t)],
X (t) - {\bar X} (t) )_H \bigg \} d t \bigg ] \nonumber \\
&& + {\mathbb E} \bigg[ \Phi ( X (T), \mathbb E
[ X(T)])
- \Phi ( {\bar X} (T), \mathbb E [\bar X(T)] )\nonumber
\\&&- \Big( \Phi_x ( {\bar X} (T),\mathbb E
 [\bar X(T)]  )+ \mathbb E[\Phi_{x'} ( {\bar X} (T),\mathbb E
 [\bar X(T)] )], X (T) - {\bar X} (T) \Big)_H \bigg] .
\end{eqnarray}
By the convexity of
${\cal H} ( t, x, x', u, , {\bar p} (t), {\bar q} (t) )$ and $\Phi (x',x),$  in view of  Proposition 1.54
of Ekeland and T\'emam (1976), we have
\begin{eqnarray}\label{eq:5.3}
{\cal H} (t) - {\bar {\cal H}} (t) &\geq& ( {\bar {\cal H}}_x (t), X (t) - {\bar X} (t) )_H
+ ( {\bar {\cal H}}_{x'} (t), \mathbb E [X ( t)] - \mathbb E[{\bar X} ( t)] )_H  \nonumber \\
&& + ( {\bar {\cal H}}_u (t), u (t) - {\bar u} (t) )_U,
\end{eqnarray}
and
\begin{eqnarray}\label{eq:5.4}
 \Phi ( X (T), \mathbb E
 [X(T)])
- \Phi ( {\bar X} (T), \mathbb E [\bar X(T)] )
 &\geq & \Big( \Phi_x ( {\bar X} (T),\mathbb E
 [\bar X(T)]  ), X (T)- {\bar X} (T) \Big)_H\nonumber
 \\&&+\Big(  \Phi_{x'} ( {\bar X} (T),\mathbb E[
 \bar X(T)] ), \mathbb E[X (T)] -\mathbb E[ {\bar X} (T)] \Big)_H  .
\end{eqnarray}
In addition, in view of the convex optimization principle
(see Proposition 2.21 of Ekeland and T\'emam, 1976), the optimality condition 3 implies that for almost all $( t, \omega ) \in
[ 0, T ] \times \Omega$,
\begin{eqnarray}\label{eq:5.5}
( {\bar {\cal H}}_u (t), u (t) - {\bar u} (t) )_U\geq 0 .
\end{eqnarray}
 Substituting  \eqref{eq:5.3}, \eqref{eq:5.4} and \eqref{eq:5.5} into \eqref{eq:5.22} yields
\begin{eqnarray*}
J ( u (\cdot) ) - J( {\bar u} (\cdot) ) \geq 0 .
\end{eqnarray*}
Therefore, since $u(\cdot)$ is arbitrary,  ${\bar u} (\cdot)$ is an optimal control process and $( {\bar u} (\cdot); {\bar X} (\cdot) )$
is an optimal pair. The proof is complete.
\end{proof}

\subsection{Optimality System of Mean-Field Stochastic Partial  Differential Equation  }

Associated with the optimal control ${\bar u} (\cdot),$
consider the following stochastic system

\begin{eqnarray}\label{eq:49}
\left\{
\begin{aligned}
d \bar X (t) =& \ [ - A (t) \bar X(t) + h ( s,
\bar X (s),
\mathbb E [\bar X (s)], u(s) ) ] d t + g ( s,
\bar X (s),\mathbb E [\bar X (s)],u(s) ) d W (t)  ,
 \\d \bar p(t) = & - \bigg[ - A^* (t) \bar p (t) +
h^*_x(t, \bar X(t), \mathbb E [\bar X(t)], u(t))\bar p(t)
+\mathbb E[ h^*_x(t, \bar X(t), \mathbb E [\bar X(t)], u(t))\bar p(t)]
\\&+g^*_x(t, \bar X(t), \mathbb E [\bar X(t)], u(t))\bar q(t)
+\mathbb E[g^*_x(t, \bar X(t), \mathbb E
[\bar X(t)], u(t))\bar q(t)]\bigg]d t
 \\&
 +l_x(t, \bar X(t), \mathbb E \bar X(t), u(t))
+\mathbb E [l_x(t, X(t), \mathbb E X(t), u(t))]\bigg]d t
 + \bar q (t) d W (t) ,  \quad t \in [0, T] ,  \\
\bar X (0) =& \ x , \\
p (T) = & \ \Phi_{x} ( X (T), \mathbb E [X(T)] )
+\mathbb E \Phi_{x'} ( X (T), \mathbb E [X(T)] ), \\
( {\bar {\cal H}}_u (t)& , v - {\bar u} (t) )_U \geq 0 , \quad \forall v \in {\mathscr U} .
\end{aligned}
\right.
\end{eqnarray}
Note that this is a mean-field fully-coupled forward-backward
stochastic partial differential equation
 consisting of  the  state equation \eqref{eq:5.1}
and adjoint equation \eqref{eq:4.1} associated with the optimal control ${\bar u} (\cdot)$
and  with the coupling presented
in the minimum condition of \eqref{eq4.15}.
 The forward-backward equation (\ref{eq:49})
 is referred to as the stochastic Hamiltonian system or
the optimality system of Problem\ref{pro:2.1}.
 The 4-tuple stochastic process $({\bar u} (\cdot), {\bar X} (\cdot),
{\bar p} (\cdot), {\bar q} (\cdot))\in {\cal M}_{\mathscr F}^2 ( 0, T; U)\times{\cal
M}_{\mathscr F}^2 ( 0, T; V )\times{\cal M}_{\mathscr F}^2 ( 0, T; V )
\times {\cal M}_{\mathscr F}^2 ( 0, T; H )$
satisfying the above is called the solution of
 (\ref{eq:49}).  Under Proper
assumptions, we can claim that the
existence of the optimal control to Problem \ref{pro:2.1} is  equivalent to the
solvability of the stochastic Hamiltonian system \eqref{eq:49}.

\begin{cor}\label{cor:4.1}
 Let Assumption \ref{ass:2.5} and Conditions 1-2
in Theorem \ref{thm:4.1} be satisfied. Then
the existence of the optimal control
to Problem \ref{pro:2.1}is equivalent to the existence of a solution to the stochastic Hamiltonian system.
\eqref{eq:49}.
\end{cor}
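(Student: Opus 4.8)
The plan is to establish the two implications of the equivalence separately, invoking the necessary maximum principle (Theorem \ref{thm:4.2}) for one direction and the sufficient maximum principle (Theorem \ref{thm:4.1}) for the other, while using the convexity Conditions 1--2 to bridge the gap between the pointwise variational inequality appearing in \eqref{eq:49} and the global minimization Condition 3 of Theorem \ref{thm:4.1}.

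First I would prove the forward implication: if an optimal control ${\bar u}(\cdot)$ exists, then the Hamiltonian system \eqref{eq:49} admits a solution. Given ${\bar u}(\cdot) \in {\cal A}$, Theorem \ref{lem:3.3} produces the unique state ${\bar X}(\cdot) \in {\cal S}_{\mathscr F}^2(0,T;H)$ solving the forward equation, and Theorem \ref{lem:1.3} produces the unique adjoint pair $({\bar p}(\cdot), {\bar q}(\cdot)) \in {\cal S}_{\mathscr F}^2(0,T;V) \times {\cal M}_{\mathscr F}^2(0,T;H)$ solving the backward equation. The minimum condition $({\bar {\cal H}}_u(t), v - {\bar u}(t))_U \geq 0$ for all $v \in {\mathscr U}$ is then exactly the content of the necessary maximum principle, Theorem \ref{thm:4.2}. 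Hence the 4-tuple $({\bar u}(\cdot), {\bar X}(\cdot), {\bar p}(\cdot), {\bar q}(\cdot))$ satisfies every relation in \eqref{eq:49} and is a solution of the stochastic Hamiltonian system.

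Next I would prove the converse: if \eqref{eq:49} has a solution $({\bar u}(\cdot), {\bar X}(\cdot), {\bar p}(\cdot), {\bar q}(\cdot))$, then ${\bar u}(\cdot)$ is optimal. By construction ${\bar X}(\cdot)$ solves the state equation driven by ${\bar u}(\cdot)$ and $({\bar p}(\cdot), {\bar q}(\cdot))$ solves the associated adjoint equation, so $({\bar u}(\cdot); {\bar X}(\cdot))$ is an admissible pair with adjoint process $({\bar p}(\cdot), {\bar q}(\cdot))$, and the last line of \eqref{eq:49} supplies the variational inequality $({\bar {\cal H}}_u(t), v - {\bar u}(t))_U \geq 0$ for all $v \in {\mathscr U}$. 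The key step is to upgrade this first-order inequality to the global minimization Condition 3 of Theorem \ref{thm:4.1}. Under Condition 1 the map $u \mapsto {\cal H}(t, {\bar X}(t), \mathbb E[{\bar X}(t)], u, {\bar p}(t), {\bar q}(t))$ is convex on the convex set ${\mathscr U}$; for a convex G\^ateaux-differentiable functional on a convex set the variational inequality is equivalent to global minimality (the convex optimization principle, Proposition 2.21 of Ekeland and T\'emam, 1976, already used in the proof of Theorem \ref{thm:4.1}). This yields ${\bar {\cal H}}(t) = \min_{u \in {\mathscr U}} {\cal H}(t, {\bar X}(t), \mathbb E[{\bar X}(t)], u, {\bar p}(t), {\bar q}(t))$ for almost every $(t,\omega)$, i.e. Condition 3. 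With Conditions 1 and 2 in force, Theorem \ref{thm:4.1} then applies and shows that $({\bar u}(\cdot); {\bar X}(\cdot))$ is an optimal pair, so ${\bar u}(\cdot)$ is an optimal control.

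The main obstacle is precisely this last upgrade from the pointwise variational inequality to pointwise global minimization. In the absence of convexity the variational inequality is only necessary for minimality, so the two directions would not match and the equivalence could fail; it is the convexity hypotheses (Conditions 1--2, assumed here) that make the necessary and sufficient principles dovetail into a genuine equivalence. A minor technical point worth checking is the almost-everywhere quantifier bookkeeping, ensuring that the minimization holds simultaneously for a.e. $(t,\omega)$ rather than merely for each fixed $v$ separately; this follows from the separability of $U$ and the continuity of ${\cal H}$ in $u$ guaranteed by Assumption \ref{ass:2.5}.
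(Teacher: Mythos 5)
Your proof is correct and takes essentially the same approach as the paper's: the necessary maximum principle (Theorem \ref{thm:4.2}) yields the forward implication, and the converse uses convexity of ${\cal H}$ in $u$ to upgrade the variational inequality in \eqref{eq:49} to the global minimum Condition 3, then invokes the sufficient maximum principle (Theorem \ref{thm:4.1}). Your explicit appeal to the convex optimization principle of Ekeland and T\'emam for that upgrade merely spells out a step the paper states in passing.
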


\begin{proof}

For the sufficient part, suppose that the stochastic Hamiltonian system \eqref{eq:49}   admits an adapted solution
$({\bar u} (\cdot), {\bar X} (\cdot), {\bar p} (\cdot),
{\bar q} (\cdot))\in {\cal M}_{\mathscr F}^2 ( 0, T; U)\times{\cal M}_{\mathscr F}^2
( 0, T; V )\times{\cal M}_{\mathscr F}^2 ( 0, T; V ) \times {\cal
M}_{\mathscr F}^2 ( 0, T; H )$ , then we begin to prove the existence of the  optimal control
of Problem \ref{pro:2.1}.
In fact, from the
minimum condition in the stochastic Hamiltonian system \eqref{eq:49} and the convexity of  ${\cal H} ( t, \mathbb E[{\bar X'} (t)], {\bar X}(t), u,
{\bar p} (t), {\bar q} (t) )$  with  $u$ , we
know that
\begin{eqnarray*}
{\cal H} ( t, {\bar X} (t), \mathbb E{\bar X} ( t ), {\bar u} (t), {\bar p} (t), {\bar q} (t) ) = \min_{ u\in {\mathscr U}}  {\cal H} ( t, {\bar
X} (t), \mathbb E {\bar X} ( t ), u,  {\bar p} (t), {\bar q} (t) ) .
\end{eqnarray*}
Therefore,  in view of
the sufficient stochastic maximum principle (see Theorem \ref{thm:4.1})  we get that $({\bar u} (\cdot); {\bar X} (\cdot))$ is an optimal pair.

For the necessary part, suppose that $({\bar u} (\cdot); {\bar X} (\cdot))$ is an optimal pair
 associated corresponding adjoint process $({\bar p} (\cdot), {\bar q} (\cdot))$,
then    in view of the necessary stochastic maximum principle,  we  get that the stochastic Hamiltonian system
\eqref{eq:49} has an adapted solution
$({\bar u} (\cdot), {\bar X} (\cdot), {\bar p} (\cdot), {\bar q} (\cdot))
\in {\cal M}_{\mathscr F}^2 ( 0, T; U)\times{\cal M}_{\mathscr F}^2 ( 0, T; V
)\times{\cal M}_{\mathscr F}^2 ( 0, T; V ) \times {\cal M}_{\mathscr F}^2 (
0, T; H )$ . The proof is complete.
\end{proof}

\section{An Application: Linear-Quadratic Optimal Control Problems For Mean-Field Stochastic Partial Differential Equation}
The case where the system dynamics are described by a set of linear differential equations and the cost is described by a quadratic function is called the LQ problem which is
one of the most important optimal control problems.  The reader is referred to  Chapter 6 in Yong and Zhou
(1999) for a complete  survey on this topic.
In this section, an infinite-dimensional lQ problem of mean-field type will be discussed. As an application,  we will solve an lQ problem
a Cauchy problem of a stochastic linear parabolic PDE  of mean field type.

\subsection{ LQ Optimal Control  of Mean- Field
Stochastic Partial Differential Equation }
This subsection is devoted to   applying the stochastic maximum principles to study
an infinite-dimensional linear-quadratic optimal control problem of mean field type, and establish the explicit dual characterization of the optimal
control with stochastic Hamiltonian system of mean field type.

  Consider the following  linear
  quadratic optimal control problem minimize
  over ${\cal A} = {\cal M}^2_{\mathscr F} ( 0, T; U )$ the following quadratic
  cost functional

  \begin{eqnarray}\label{eq:6.1}
\begin{split}
J ( u (\cdot) ) = &{\mathbb E} [ ( \Phi_1 X (T), X(T) )_H ]+{\mathbb E} [ ( \Phi_2  \mathbb E[X (T)],
 \mathbb E[X (T)] )_H ]
\\&+{\mathbb E} \bigg [ \int_0^T ( G_1 (s) X (s), X
(s) )_H d s \bigg ]+ {\mathbb E} \bigg [ \int_0^T ( G_2 (s)  \mathbb E[X (s)],\mathbb E [X(s)] )_H d s \bigg ]
\\&+ {\mathbb E} \bigg [ \int_0^T ( N (s) u (s), u (s) )_U d s
\bigg ],
\end{split}
\end{eqnarray}
  where  $X(\cdot)$ is  the
  solution of the
 controlled
linear MF-SPDE in the Gelfand triple $(V, H, V^*)$:
\begin{eqnarray}\label{eq:6.2}
\left\{
\begin{aligned}
d X (t) =& \ [ - A (t) X (t)
+ B_1(t ) X( t)+ B_2 (t ) \mathbb E
[ X ( t)]
+ C (t) u (t) ] d t+ [ D_1 (t) X (t) + D_2 ( t )
\mathbb E [X ( t  )]
+ F (t) u (t) ] d W (t) ,  \\
X(0) =& \ x  , \quad t \in [0, T] ,
\end{aligned}
\right.
\end{eqnarray}
Here $ A, B_1, B_2, C, D_1, D_2, F,  G_1, G_2, N,\Phi_1$ and $\Phi_2$ are given random mappings such that $A: [ 0, T ] \times \Omega \rightarrow {\mathscr L}
( V, V^* )$, $B_1, B_2,  D_1, D_2, G_1, G_2: [ 0, T ] \times \Omega \rightarrow
{\mathscr L} ( H, H )$, $C,F: [ 0, T ] \times \Omega
\rightarrow {\mathscr L} ( U, H )$, $N: [ 0, T ] \times \Omega
\rightarrow {\mathscr L} ( U, U )$ and $\Phi_1, \Phi_2: \Omega \rightarrow
{\mathscr L} ( H, H )$, satisfying the following assumptions:

\begin{ass}\label{ass:5.1} The operator $A$ satisfies
the coercivity and boundedness conditions, i.e., (i) and (ii) in
Assumption \ref{ass:1.1}.  The mappings  $A, B_1, B_2, C, D_1, D_2, F,  G_1, G_2, N,$ $G_1, G_2$ and $N$ are uniformly bounded ${\mathbb
F}$-predictable processes, $\Phi_1$ and $\Phi_2$ is a uniformly bounded ${\cal
F}_T$-measurable random variable.
\end{ass}

\begin{ass}\label{ass:5.2}
The stochastic processes $G_1, G_2$, $N$ and the random variables $\Phi_1$ and $ \Phi_2$  are
nonnegative operators, a.e. $t \in [ 0, T ]$, ${\mathbb P}$-a.s..
Moreover, $N$ is uniformly positive a.e. $t \in [ 0, T ]$, ${\mathbb
P}$-a.s., i.e., for $\forall u \in U$, $( N u, u )_U \geq k ( u, u
)_U$, for some positive constant $k$, a.e. $t \in [ 0, T ]$,
${\mathbb P}$-a.s..
\end{ass}
 In the general control problem \ref{pro:2.1}, we
 specify the coefficients $h$, $g$, $l$ and $\Phi$
 with
\begin{eqnarray*}
&& h ( t, x, x', u) =  B_1(t ) x + B_2 (t )x'
+ C (t) u  , \\
&& g ( t, x, x', u ) = D_1 (t) x + D_2 ( t ) x'
+ F (t) u , \\
&& l ( t, x, x',  u ) = ( G_1 (t) x,  x )_H
+( G_2 (t) x',  x' )_H +( N (t) u, u )_U
 \\&&\Phi (x, x') = ( \Phi_1 x,  x )_H+( \Phi_2 x',  x' )_H
.
\end{eqnarray*}
 By Assumptions \ref{ass:5.1} and \ref{ass:5.2},
  it is easily to check that Assumption \ref{ass:2.5} on the coefficients
$(A, h, g, l, \Phi)$ holds. So our LQ Problem
 can be embedded in Problem\ref{pro:2.1}.
In this case, the  Hamiltonian ${\cal H}$
has the following form:
\begin{eqnarray}\label{eq:6.7}
{\cal H} ( t, x, x', u, p, q )
&=& (B_1(t)x+ B _2(t) x' + C (t) u, p )_H  + ( D_1 (t) x + D_2 ( t ) x + F (t) u, q )_H
+ ( G _1(t) x, x )_H \\&&+ ( G _2(t) x, x )_H + ( N (t) u, u )_U. \nonumber
\end{eqnarray}
 Here  we  denote the adjoint operators of $B_1, B_2$, $C_1$, $C_2$, $D$, and $F$ by  $B^*_1, B_2^*$, $C^*_1$, $C^*_2$, $D^*$ and $F^*_1$, respectively.
Associated with an  admissible
pair $( u (\cdot); X (\cdot) ),$ the adjoint equation \eqref{eq:4.1}  has the following form:
\begin{eqnarray}\label{eq:7.6}
\left\{
\begin{aligned}
d p(t) =& - \{- A^*(t) p (t)
 +B^*_1(t)X(t)+
 + \mathbb E[B^*_2 ( t) p (t)] + D_1^* (t) q (t) + \mathbb E[ D_2^* (t)q_2(t)]
 \\&+ 2 G_1 (t) X (t)
 +2 \mathbb E[ G_2 (t) X (t)]\} d t + q (t) d W (t) ,  \quad t \in [0, T] , \\
p (T) =& \ 2 \Phi_1 X (T)+ 2 \mathbb E [\Phi_2 X (T)].
\end{aligned}
\right.
\end{eqnarray}
 Because  in this case there is no constraint on the control, the minimum condition \eqref{eq4.15} of the optimal control is
\begin{eqnarray}\label{eq:6.5}
{\cal H}_u ( t, X (t), \mathbb E [X ( t)], p (t), q (t), u (t) ) = 0 .
\end{eqnarray}
Therefore the stochastic Hamiltonian system is  the following
fully-coupled linear forward-backward stochastic partial differential  equation
\begin{eqnarray}\label{eq:6.8}
\left\{
\begin{aligned}
d X (t) =& \ [ - A (t) x (t)
+ B_1(t ) X ( t)+ B_2 (t ) \mathbb E [X ( t)]
+ C (t) u (t) ] d t+ [ D_1 (t) X (t) + D_2 ( t )
\mathbb E [X ( t  )]
+ F (t) u (t) ] d W (t)  , \\
d p(t) =& - \{- A^*(t) p (t)
 +B^*_1(t)X(t)+
  \mathbb E[B^*_2 ( t) p (t)] + D_1^* (t) q (t) + \mathbb E[ D_2^* (t)q_2(t)]
 \\&+ 2 G_1 (t) X (t)
 +2 \mathbb E[ G_2 (t) X (t)]\} d t + q (t) d W (t) , \quad t \in [0, T] , \\
x (0) =& \ x ,\\
p (T) =& \ 2 \Phi_1 X (T)+2 \mathbb E[ \Phi_2X(T)] , \\
{\cal H}_u ( t&, X (t), \mathbb E [X ( t)], p (t), q (t), u (t) )  = 0 . \\
\end{aligned}
\right.
\end{eqnarray}
Now we give the dual
characterization of the optimal control.

\begin{thm}\label{them:4.1}
 Let Assumptions \ref{ass:5.1}-\ref{ass:5.2}
 be satisfied.  Then our LQ problem has a
 unique optimal control which implies that
 the stochastic Hamiltonian system \eqref{eq:6.8} has a unique adapted solution
$( {\bar u} (\cdot), {\bar X} (\cdot), {\bar p} (\cdot), {\bar q} (\cdot))\in {\cal M}_{\mathscr F}^2 ( 0, T; U)
\times {\cal M}_{\mathscr F}^2 ( 0, T; V )\times{\cal M}_{\mathscr F}^2 ( 0, T; V ) \times {\cal M}_{\mathscr F}^2 ( 0, T; H ).$
Moreover the optimal control is given by
\begin{eqnarray}\label{eq:6.71}
{\bar u} (t) &=& - \frac{1}{2} N ^{-1}(t) \big
 [C^* (t) {\bar p} (t) + F^* (t) {\bar q} (t)
 \big ] .
\end{eqnarray}
\end{thm}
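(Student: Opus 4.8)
The plan is to recognize this as a standard infinite-dimensional LQ problem and to run it through the general maximum-principle machinery already developed. First I would establish that the cost functional $J$ in \eqref{eq:6.1} possesses a unique minimizer over the Hilbert space ${\cal A}={\cal M}^2_{\mathscr F}(0,T;U)$. Since the state equation \eqref{eq:6.2} is linear, the map $u(\cdot)\mapsto X^u(\cdot)$ is affine; writing $X^u=X^0+Lu$, where $X^0$ is the solution corresponding to $u\equiv0$ and $L$ is the bounded linear operator whose boundedness follows from the a priori estimate \eqref{eq:c6} of Theorem \ref{thm:3.4}, one sees that $J$ is a quadratic functional of $u$ whose purely quadratic part is
\begin{eqnarray*}
Q(u)&=&{\mathbb E}[(\Phi_1 (Lu)(T),(Lu)(T))_H]+{\mathbb E}[(\Phi_2 {\mathbb E}[(Lu)(T)],{\mathbb E}[(Lu)(T)])_H]\\
&&+{\mathbb E}\bigg[\int_0^T (G_1(s)(Lu)(s),(Lu)(s))_H\,ds\bigg]+{\mathbb E}\bigg[\int_0^T (G_2(s){\mathbb E}[(Lu)(s)],{\mathbb E}[(Lu)(s)])_H\,ds\bigg]\\
&&+{\mathbb E}\bigg[\int_0^T (N(s)u(s),u(s))_U\,ds\bigg].
\end{eqnarray*}
Because $\Phi_1,\Phi_2,G_1,G_2$ are nonnegative and $N$ is uniformly positive (Assumption \ref{ass:5.2}), every term of $Q$ is nonnegative while the last term satisfies $(N(s)u,u)_U\geq k\|u\|_U^2$, so $Q(u)\geq k\|u\|_{\cal A}^2$. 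Hence $J$ is strictly convex and coercive on ${\cal A}$; being also continuous it is weakly lower semicontinuous, and the direct method yields a unique optimal control ${\bar u}(\cdot)$.

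Next I would pin down the explicit form \eqref{eq:6.71}. Since the control is unconstrained (${\mathscr U}=U$), the necessary maximum principle (Theorem \ref{thm:4.2}) forces the interior condition ${\bar{\cal H}}_u(t)=0$ for a.e. $(t,\omega)$. Differentiating the Hamiltonian \eqref{eq:6.7} in $u$ gives ${\cal H}_u(t,x,x',u,p,q)=C^*(t)p+F^*(t)q+2N(t)u$ (using the symmetry of $N$). Evaluating along the optimal pair and setting the result to zero yields $C^*(t){\bar p}(t)+F^*(t){\bar q}(t)+2N(t){\bar u}(t)=0$. As $N$ is uniformly positive it is boundedly invertible with $\|N^{-1}(t)\|\leq 1/k$, and $N^{-1}(\cdot)$ inherits predictability; solving for ${\bar u}(t)$ produces precisely \eqref{eq:6.71}.

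Finally I would transfer these facts to the stochastic Hamiltonian system \eqref{eq:6.8}. For the LQ data one checks that ${\cal H}(t,x,x',u,{\bar p}(t),{\bar q}(t))$ is convex in $(x,x',u)$ and $\Phi$ is convex in $(x,x')$, so Corollary \ref{cor:4.1} applies and the existence of the (just-constructed) optimal control is equivalent to the solvability of \eqref{eq:6.8}, delivering at least one adapted solution. For uniqueness, suppose $(\,{\bar u}_i,{\bar X}_i,{\bar p}_i,{\bar q}_i\,)$, $i=1,2$, both solve \eqref{eq:6.8}. By the sufficient maximum principle (Theorem \ref{thm:4.1}) each ${\bar u}_i$ is an optimal control, whence uniqueness of the minimizer gives ${\bar u}_1={\bar u}_2$; then ${\bar X}_1={\bar X}_2$ by well-posedness of the state MF-SPDE (Theorem \ref{lem:3.3}), and subsequently $({\bar p}_1,{\bar q}_1)=({\bar p}_2,{\bar q}_2)$ by well-posedness of the adjoint MF-BSPDE (Theorem \ref{lem:1.3}). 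Thus \eqref{eq:6.8} has a unique adapted solution.

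The step I expect to demand the most care is the coercivity of $J$: the dependence of $X$ on $u$ couples all the quadratic terms, so one must verify that the cross terms between state and control cannot destroy the lower bound, which is exactly where the nonnegativity of $\Phi_1,\Phi_2,G_1,G_2$ combined with the strict positivity of $N$ is indispensable. Once strict convexity and coercivity are secured, the remaining claims follow mechanically from the already-established maximum principles and the well-posedness results of the earlier sections.
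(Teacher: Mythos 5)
Your proposal is correct and follows essentially the same route as the paper: strict convexity and coercivity of $J$ (from Assumption \ref{ass:5.2}) yield a unique minimizer, the explicit formula \eqref{eq:6.71} comes from solving the unconstrained first-order condition ${\bar {\cal H}}_u(t)=0$, and Corollary \ref{cor:4.1} together with the well-posedness theorems for the state MF-SPDE and adjoint MF-BSPDE gives existence and uniqueness of the adapted solution to the Hamiltonian system \eqref{eq:6.8}. Your affine decomposition $X^u = X^0 + Lu$ merely spells out in more detail the strict convexity that the paper asserts directly from the uniform positivity of $N$ (citing Ekeland and T\'emam instead of the direct method), so the two arguments coincide in substance.
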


\begin{proof}
 In view of the continuous dependence theorem of MF-SPDE ( see
Theorem \ref{lem:1.4}),  we have that  the cost functional
$J ( u (\cdot) )$ is continuous over
${\cal M}^2_{\mathscr F} ( 0, T; U )$.  From
the uniformly strictly positivity of the process $N,$ we conclude that the cost functional $J
( u (\cdot) )$ is strictly convex and
\begin{eqnarray*}
J ( u (\cdot) ) \geq k {\mathbb E} \bigg [ \int^T_0 || u (t) ||^2 _Ud t \bigg ]
= k \| u (\cdot) \|^2_{{\cal M}^2_{\mathscr F} ( 0, T; U )} .
\end{eqnarray*}
 Therefore the cost functional $J ( u (\cdot) )$ is coercive, i.e.,
\begin{eqnarray*}
\lim_{ \| u (\cdot) \|_{{\cal M}^2_{\mathscr F} ( 0, T; U )} {\rightarrow \infty} } J ( u (\cdot) ) = \infty .
\end{eqnarray*}
 In the end, we get  the uniqueness and existence of the optimal control
${\bar u} (\cdot) \in {\cal M}^2_{\mathscr F} ( 0, T; U )$ of our LQ problem by Proposition 2.12 of Ekeland and T\'emam (1976).

Now we begin to prove that the stochastic Hamiltonian system
\eqref{eq:6.8} has a unique adapted solution. Indeed in view of  Corollary \ref{cor:4.1},
the existence of the optimal control ${\bar u} (\cdot)$ of our LQ problem \ref{pro:2.1} implies that the
stochastic Hamiltonian system \eqref{eq:6.8} has a solution $( {\bar u} (\cdot), {\bar x} (\cdot), {\bar p} (\cdot), {\bar q} (\cdot))
\in {\cal M}_{\mathscr F}^2 ( 0, T; U)\times{\cal M}_{\mathscr F}^2 ( 0, T; V )\times{\cal M}_{\mathscr F}^2 ( 0, T; V )
\times {\cal M}_{\mathscr F}^2 ( 0, T; H )$,
Here ${\bar x} (\cdot)$ is the optimal state and $({\bar p} (\cdot), {\bar q} (\cdot))$ is
the adjoint process corresponding the optimal control ${\bar u} (\cdot)$.
If the stochastic Hamiltonian system \eqref{eq:6.8} has another adapted solution $({\bar u}^\prime (\cdot),
{\bar x}^\prime (\cdot), {\bar p}^\prime (\cdot), {\bar q}^\prime (\cdot) )$, then  view of  Corollary \ref{cor:4.1},
$({\bar u}^\prime (\cdot); {\bar X}^\prime (\cdot))$ have  to  be an optimal pair of  our  LQ Problem.
So  ${\bar u} (\cdot)= {\bar u}^\prime (\cdot)$
 due to the uniqueness of  the optimal control. Moreover,
  from the uniqueness of solutions to MF-SPDE (see Theorem \ref{lem:1.1}) and MF-BSPDE (see Theorem \ref{lem:1.3}),
we get  $( {\bar x} (\cdot), {\bar p} (\cdot), {\bar q} (\cdot)) = ( {\bar x}^\prime (\cdot), {\bar p}^\prime (\cdot),
{\bar q}^\prime (\cdot))$. Therefore, the stochastic Hamiltonian system \eqref{eq:6.8} admits a unique solution. In the end, the dual characterization
 \eqref{eq:6.71} of the unique optimal
 can  be directly  obtained by solving  the minimum condition  \eqref{eq:6.5}.
\end{proof}

\subsection{LQ control of the Cauchy
problem for stochastic linear PDE of Mean Field Type}

In this subsection,  in terms of the results in the previous subsection,  we solve a LQ problem of   a Cauchy problem for a controlled stochastic linear PDE of mean-field type.

 Now we give  some preliminaries of Sobolev spaces.
For $m = 0, 1$,  introduce  the space $H^m \triangleq \{ \phi:
\partial_z^\alpha \phi \in L^2 ( {\mathbb R}^d ), \ \mbox {for any}
\ \alpha: =( \alpha_1, \cdots, \alpha_d ) \ \mbox {with} \ |\alpha|
:= | \alpha_1 | + \cdots + | \alpha_d | \leq m \}$ with the norm
\begin{eqnarray*}
\| \phi \|_m \triangleq \left \{ \sum_{ |\alpha| \leq m } \int_{{\mathbb R}^d}
| \partial_z^\alpha \phi (z) |^2 d z \right \}^{\frac{1}{2}} .
\end{eqnarray*}
The dual space of $H^1$ is  denoted by $H^{-1}$. Put $V = H^1$, $H
= H^0$, $V^* = H^{-1}$. Then we claim that $( V, H, V^* )$ is   a Gelfand triple.

  Suppose that the control domain is    ${\mathscr U} = U = H$. For any
admissible control $u (\cdot, \cdot) \in {\cal M}^2_{\mathscr F} ( 0, T; U )$,
 we introduce  the  controlled  Cauchy problem, where the state process
is the  following  stochastic partial differential equation of mean-field type in
divergence form:
\begin{eqnarray}\label{eq6.13}
\left\{
\begin{aligned}
d y ( t,z ) = & \ \big \{ \partial_{z^i} [ a^{ij} ( t, z )
\partial_{z^j} y ( t, z ) ]
+ b^i ( t, z ) \partial_{z^i} y ( t, z ) + c ( t, z ) y ( t, z ) + \eta ( t, z )
\mathbb E [ y ( t, z )] + u ( t, z ) \big \} d t \\
& + [ \rho ( t, z ) y ( t, z ) + \sigma ( t, z )
 \mathbb E[y ( t,z)] +
u ( t, z )  ] d W (t) ,
\quad ( t, z ) \in [ 0, T ] \times {\mathbb R}^d , \\
y ( 0, z ) =&x \in  {\mathbb R}^d ,
\end{aligned}
\right.
\end{eqnarray}
and the cost functional is
\begin{eqnarray}\label{eq:6.14}
\begin{split}
&\inf_{u (\cdot, \cdot) \in {\cal M}^2_{\mathscr F} ( 0, T; U )} \bigg \{
{\mathbb E} \bigg [ \int_{{\mathbb R}^d} y^2 ( T, z ) d z+{\mathbb E} \bigg [ \int_{{\mathbb R}^d}
 |{\mathbb E} y( T, z )|^2 d z + \iint_{[
0, T ] \times {{\mathbb R}^d}} y^2 ( s, z ) d z  d s \\&~~~~~~~~~+ \iint_{[
0, T ] \times {{\mathbb R}^d}} |\mathbb E y ( s, z ) |^2 d z  d s
+ \iint_{[ 0, T
] \times {{\mathbb R}^d}} u^2 ( s, z ) d z d s \bigg ] \bigg \} .
\end{split}
\end{eqnarray}
Here the coefficients $a^{ij}$, $b^i$, $c$, $\eta$, $\rho$, $\sigma$ and
the initial condition $x$ are given  random functions satisfying
the following assumptions, for some fixed constants $K \in ( 1,
\infty )$ and $\kappa \in ( 0,1 )$:

\begin{ass}\label{ass:6.3}
 $a^{ij}$, $b^i$, $c$, $\eta$, $\rho$ and $\sigma$ are
${\mathscr P} \times {\mathscr B} ({\mathbb R}^d )$-measurable taking
values in the space of real symmetric $d \times d$ matrices, ${\mathbb
R}^{d}$, ${\mathbb R}$, ${\mathbb R}$, ${\mathbb R}$ and ${\mathbb
R}$, respectively, and are bounded by $K$.
\end{ass}

\begin{ass}\label{ass:6.4}
$a^{ij}$ satisfies the following super-parabolic condition
\begin{eqnarray*}
\kappa I \leq 2 a^{ij} ( t, \omega, z ) \leq K I , \quad \forall ( t, \omega, z ) \in
[ 0, T ] \times \Omega \times {\mathbb R}^d ,
\end{eqnarray*}
where $I$ is the $(d \times d)$-identity matrix.
\end{ass}

 In this case, in the Gelfand triple $(V, H, V^*)$ the state equation \eqref{eq6.13}
  can be written as the abstract MF-SPDE
:
\begin{eqnarray}\label{eq:10.1}
\left\{
\begin{aligned}
d y (t) = & \ [ - A (t) y (t) +B_2(t) \mathbb E [y(t)]+ u (t) ] d t + [ D_1 (t) y (t) + D_2 ( t ) \mathbb E [y ( t )] + u (t)] d W (t) ,  \quad t \in [0, T] , \\
y(0) =& x  ,
\end{aligned}
\right.
\end{eqnarray}
where the operators $A$, $B_2$, $D_1$, $D_2$ are
denoted  by
\begin{eqnarray*}
&& A (t) \phi (z) \triangleq - \partial_{z^i} [ a^{ij} ( t, z )
\partial_{z^j} \phi (z) ]
- b^i ( t, z ) \partial_{z^i} \phi (z) - c ( t, z ) \phi (z)  , \quad \forall \phi \in V , \\
&& B_2 (t) \phi (z) \triangleq \eta ( t, z )
, \quad D_1 (t)
\phi (z) \triangleq \rho ( t, z ) \phi (z) , \quad D_2 (t) \phi (z)
\triangleq \sigma ( t , z ) \phi (z) , \quad \forall \phi \in H .
\end{eqnarray*}
Then  we write the optimal control problem as
\begin{eqnarray}\label{eq:6.17}
\begin{split}
&\inf_{u (\cdot) \in {\cal M}^2_{\mathscr F} ( 0, T; U )} \bigg \{
{\mathbb E} \bigg [ ( y (T), y (T))_H + \int_0^T ( y (s), y (s) )_H
d s+ ( \mathbb E[y (T)],
 \mathbb E [y (T)])_H \\&
 ~~~~~~~~~~~~~~+ \int_0^T ( \mathbb E [y (s)],
  \mathbb E [y (s)] )_H
d s + \int_0^T ( u (s), u (s) )_H d s \bigg ] \bigg \} .
\end{split}
\end{eqnarray}\\
Thus this optimal control problem becomes a special case of  our LQ problem in the previous subsection, where $C$, $F$, $N$ and $G$ are
identity operators and $B_1=0$.  From  Assumptions \ref{ass:6.3}-\ref{ass:6.4}, it is easy to check that the optimal control problem \eqref{eq:6.17} satisfies
Assumptions \ref{ass:5.1}-\ref{ass:5.2}. So in view of  Theorem
\ref{them:4.1}, we claim that the optimal control ${\bar u}
(\cdot)$  has the following explicit characterization:
\begin{eqnarray*}
{\bar u} (t) = -\frac{1}{2} \big \{ {\bar p} (t) + {\bar q} (t)\big \} ,
\end{eqnarray*}
where  $( {\bar p} (\cdot), {\bar q} (\cdot) )$ is the unique
solution of the adjoint equation
\begin{eqnarray}\label{eq:6.19}
\left\{
\begin{aligned}
d p (t) =& - \big \{ - A^*(t) p (t) + {\mathbb E} [ B^*_2 ( t) p ( t)]
+ D_1^* (t) q (t) \\
& + {\mathbb E} [ D_2^* ( t  ) q ( t)]+ 2 y (t) \big \} d t
+ q (t) d W (t) , \quad t \in [0, T] , \\
p (T) =& \ 2 y (T) .
\end{aligned}
\right.
\end{eqnarray}
Here $A^*$, $B^*_2$, $D_1^*$, $D_2^*$ denote the adjoint operators of $A$, $B$,
$D_1$, $D_2$. More specifically,
\begin{eqnarray*}
A^*(t) \phi (z)= - \partial_{z^i} [ a^{ij}
( t, z ) \partial_{z^j} \phi (z) ] + b^i ( t, z ) \partial_{z^i}
\phi (z) - [c ( t, z )-\partial_{z^i}b^i(t,z)] \phi (z) , \quad \forall \phi \in V ,
\end{eqnarray*}
and $B^*_2 = B_2$, $D_1^* = D_1$, $D_2^* = D_2$.

\section{Conclusion}

In  this paper we studied the MF-SPDE
and MF-BSPDE and the corresponding
optimal control problem for MF-SPDE.
We establish the  basic
results on  the continuous dependence
theory and existence and uniqueness
theory of
the solution to MF-SPDE
and MF-BSPDE under the proper assumptions on
the coefficients, respectively.
For the optimal control problem of MF-SPDE, we
obtain the necessary and sufficient optimality
conditions for the existence of the
optimal control by convex
variation techniques and duality theory.
An an application, the LQ problem for  MF-SPDE
is investigated to illustrate our optimal control theory result established. As a result, the existence,  uniqueness and  explicit duality presentation of
the optimal control are obtained.

\end{document}